\documentclass{amsart}
\usepackage[T1]{fontenc}
\usepackage{lmodern}
\usepackage{amsmath}
\usepackage{amssymb}
\usepackage{amsthm}
\usepackage{graphicx}
\usepackage{hyperref}
\usepackage{booktabs}

\providecommand{\tightlist}{\setlength{\itemsep}{0pt}\setlength{\parskip}{0pt}}

\newtheorem{theorem}{Theorem}[section]
\newtheorem{proposition}[theorem]{Proposition}
\newtheorem{lemma}[theorem]{Lemma}
\newtheorem{conjecture}[theorem]{Conjecture}
\theoremstyle{definition}
\newtheorem{definition}{Definition}[section]
\theoremstyle{remark}
\newtheorem*{remark}{Remark}
\theoremstyle{plain}
\newtheorem{innercustomthm}{Theorem}
\newenvironment{customthm}[2][]{%
  \renewcommand\theinnercustomthm{#2}%
  \ifx\relax#1\relax\innercustomthm\else\innercustomthm[#1]\fi
}{\endinnercustomthm}
\newtheorem{innercustomprop}{Proposition}
\newenvironment{customprop}[2][]{%
  \renewcommand\theinnercustomprop{#2}%
  \ifx\relax#1\relax\innercustomprop\else\innercustomprop[#1]\fi
}{\endinnercustomprop}

\title[A deterministic \texorpdfstring{\(\sin^2\)}{sin2}-type algorithm for complex cubics]{A deterministic \texorpdfstring{\(\sin^2\)}{sin2}-type algorithm for complex cubic irrationalities with exact periodicity certificates}
\author{Ludovic Tagnon}
\address{Nancy, France}
\email{ludovic.tn.ry@gmail.com}
\subjclass[2020]{11J70, 11R16, 11Y65, 11H50}
\keywords{Hermite's problem, multidimensional continued fractions, sin2-algorithm, cubic fields, unit certificates}

\hypersetup{hidelinks,
  pdftitle={A deterministic sin2-type algorithm for complex cubic irrationalities with exact periodicity certificates},
  pdfauthor={Ludovic Tagnon},
  pdfsubject={Number theory: Hermite's problem for complex cubic irrationalities (Problem 4 of Karpenkov)},
  pdfkeywords={Hermite's problem, multidimensional continued fractions, sin2-algorithm, complex cubic fields, unit certificates, exact certification}}
\begin{document}

\begin{abstract}
Hermite asked in 1848 for a representation of real numbers whose eventual periodicity characterizes cubic irrationals. The totally real case was solved by Karpenkov's \(\sin^2\)-algorithm; the complex case (signature (1,1)) remains open --- Problem 4 of Karpenkov's Monatshefte paper, with the suggested analytic extension of the \(\sin^2\) formula. We study a deterministic algorithm implementing that suggestion: on (1,1) data the score expression is strictly negative (we prove a closed form), the algorithm selects the most negative score, and the frequent, provably exact score ties are resolved by a declared ordering. We report three groups of results with a strict separation of statuses. First, on a fixed sample of 205 complex cubic polynomials, every run of the sign-normalized campaign (195 seeds directly, 10 via the mirror generator \(-\alpha\)) closes projectively with an exact unit certificate \(\lambda=\zeta\cdot\varepsilon^k\in\mathcal{O}_K^\times\), and every transition is certified by exact score comparisons in \(\mathbb{Q}(\alpha)\): the computed dynamics is the exact algorithm's. An exhaustive campaign over the full box \([-3,3]^3\) (194 polynomials, no sampling freedom) closes 194/194 under the same certification, namely 97 positive-root representative expansions (82 distinct digit sequences) counted together with their mirror partners. Second, across 457 deformed starting bases the terminal cycle is always the same, by exact state-level comparison --- an observed invariant of the marked lattice \((p,\mathbb{Z}[\alpha])\), demonstrably not of the field alone --- and measured descent diagnostics quantify the reduction behaviour a proof would need. Third, for four fields we compute closed finite transition graphs; for the plastic field the full transition map is decided by exact comparisons in \(\mathbb{Q}(\alpha)\), and the plastic transition graph with its convergence to the terminal cycle is machine-checked in Lean 4, kernel-only, under the standard axioms. The admissible set of bounded height is not confined to the certified window (checked at a $+10$ widening, plastic field) and shows no sign of saturating. The general periodicity problem remains open. All data and scripts ship in a public archive with a portable verifier.
\end{abstract}

\maketitle

\section{Introduction}\label{sec:introduction}

\textbf{The problem.} By Lagrange's theorem, the continued fraction of a real number is eventually periodic if and only if the number is a quadratic irrationality. Hermite asked in 1848 \cite{He1850} whether an analogous representation exists for cubic irrationalities. Geometrically, periodicity is available: the Klein sail of a cubic field is periodic under the Dirichlet group of units \cite{GL08,Ts83}, the period operator being the integer matrix of multiplication by a unit. The algorithmic question --- to produce, from the number alone, an integer digit sequence whose eventual periodicity detects cubicity, the period revealing the unit --- is the open half. Karpenkov solved it for totally real cubic vectors with the \(\sin^2\)-algorithm and proved the corresponding periodicity theorem; for the complex signature (1,1), which includes all cubic roots such as \(\sqrt[3]{2}\) and \(\sqrt[3]{4}\), the problem is stated as open (Problem 4 of \cite{Ka24}, \S9: ``find a generalized Euclidean algorithm that is periodic for real cubic vectors in the non-totally-real case'', with the remark that ``the analytic extension of the formula for \(\sin^2\alpha\) to the complex numbers may be of use''), and with an elementary obstruction: a non-torsion unit of a complex cubic field acts on the complex place by a rotation of irrational angle (if a power had real complex embedding, separability and the absence of a quadratic subfield would force that power to be rational, hence \(\pm1\)); related non-closing phenomena for normalized approximations are studied in \cite{DH25}. Prior to this work, the empirical record for the \(\sin^2\)/HAPD family of algorithms on complex cubics consisted, to our knowledge (literature reviewed up to July 2026), of two published examples of Karpenkov; the Jacobi--Perron literature contains further periodic complex expansions (e.g.\ \cite{AR11}), and other multidimensional continued fraction families detect cubicity in the totally real case \cite{Da14}; but for no algorithm in any of these families is complex periodicity characterized.

Periodic algorithms that produce the fundamental unit in unit-rank-one fields go back to Vorono\"i and were generalized by Buchmann \cite{Bu85a,Bu85b} to all fields of unit rank one, including complex cubic fields, but these algorithms take the field or an order as input; here the input and question differ: the \(\sin^2\)-type algorithm runs on an arbitrary real vector, and periodicity of the output is the criterion characterizing cubicity, with the unit certificate recovered a posteriori and verified exactly in \(\mathbb{Q}(\alpha)\). Karpenkov's HAPD algorithm is observed to cycle on cubic data with complex conjugate roots \cite[Example~2.11]{Ka22}, while the complex case is explicitly separated in the conjectural discussion \cite[Conjecture~2 and Remark~2.12]{Ka22} and the rank-one Dirichlet-group structure appears in \cite[Example~4.2]{Ka22}; our contribution for that case is a deterministic \(\sin^2\)-analytic algorithm --- the extension suggested by Problem~4 of \cite{Ka24} --- together with a closed-form negativity lemma for the score, a declared exact tie-breaking rule, and instance-by-instance exact certification with a portable verifier, where the practical route discussed in \cite[\S2.4]{Ka22} detects periods from sufficiently good decimal approximations. Certification of periodicity by an algebraic certificate has a recent precedent in the repetend-matrix form of \v{R}ada--Starosta--Kala \cite{RSK24} for general multidimensional continued fractions, applied there to Jacobi--Perron-type algorithms; the certificates here are of a different nature --- fundamental-unit identities checked exactly in the field, together with exact score comparisons --- and cover the complex-conjugate cubic case.

\textbf{What this paper does.} We make Karpenkov's suggestion effective and study it at scale. Our contributions are fourfold. First, a \emph{determinization}: the minimization branch admits exact score ties (we exhibit them and certify some as algebraic equalities), so the algorithm is only well-defined once a tie-breaking convention is fixed; we adopt a lexicographic convention stated in Section 2. Second, \emph{mass periodicity with certificates}: on 205 complex cubic polynomials the deterministic algorithm is eventually periodic with an exact unit certificate (195 directly from the canonical seed, the remaining 10 for the sign-normalized mirror generator; Section 3), and we quantify the arithmetic of the certificates (the exponent \(k\), the closing identity); an exhaustive campaign over the full coefficient box \([-3,3]^3\) (\S3.3) removes the sampling freedom altogether. Third, a \emph{canonical-cycle phenomenon} and a measured \emph{reduction profile} (Sections 4--5): deformation experiments indicate that the terminal cycle is an invariant of the marked lattice --- robust under change of basis, though demonstrably not of the field alone (\S4.4) --- and the norm height along trajectories exhibits the quantitative behaviour that reduction-theory proofs require; we state these as calibrated measurements, not theorems. Fourth, \emph{machine-verified instances} (Section 6): for the field of \(x^3-x-1\) we verify by exhaustive exact computation a global convergence statement over an explicit finite set of admissible states, with three further complete instances (among them the field of \(\sqrt[3]{2}\)) and twelve orbit-level ones; we also document why the naive identification of this explicit set with ``all admissible states of bounded height'' fails --- the admissible set is wilder than the reachable one, which we regard as a structural finding in its own right.

\textbf{What this paper does not do.} It does not solve the complex case of Hermite's problem: no general periodicity theorem is proved here. It provides the deterministic algorithm, to our knowledge the largest certified empirical record to date, the measured shape of a possible proof, and the first certified finite-graph instances. Table~\ref{tab:status} (\S8.1) recapitulates, block by block, what is exact, what is measured, and what is conjectural; the methodology and the reproducibility archive are described in Section~9.

\textbf{Notation.} \(K=\mathbb{Q}(\alpha)\) a complex cubic field (signature (1,1)); \(\sigma_r\) the real embedding, \(\sigma_c\) one complex embedding; \(\varepsilon\) a fundamental unit, \(\operatorname{Reg}=\lvert\log\lvert\sigma_r(\varepsilon)\rvert\rvert\); \(N\) the absolute norm. States are triples of elements of a fixed lattice \(L\subset K\), acted on by unimodular transition matrices; the height of a state is \(H(s)=\max_i\lvert N(u_i)\rvert\) (the norm height, unit-invariant; \S5.1). \(B_K\) denotes the observed maximum of \(H\) over the computed canonical cycle (``band'') --- a computed quantity per field, not a hypothesized constant.

\section{The deterministic algorithm}\label{sec:algorithm}

\subsection{States and moves}\label{subsec:states-moves}

Fix a monic irreducible cubic \(p\in\mathbb{Z}[x]\) of signature (1,1), \(\alpha\) its real root, \(K=\mathbb{Q}(\alpha)\), and the lattice \(L=\mathbb{Z}[\alpha]\) with basis \((1,\alpha,\alpha^2)\). The \emph{canonical seed} of every run in this paper is that basis itself, \(s_0=(1,\alpha,\alpha^2)\) in identity coordinates, relabeled by the sorting convention below; all campaigns of \S3--\S5 start from \(s_0\).

\begin{definition}[state, admissibility]\label{def:state}
A \emph{state} is an ordered triple \(s=(u_1,u_2,u_3)\) of elements of \(L\), stored as integer coordinate vectors in the basis \((1,\alpha,\alpha^2)\); the absolute value \(d_0\) of the determinant of the coordinate matrix is invariant under the (unimodular) moves and is fixed by the seed. A state is \emph{admissible} if its three real embeddings are positive; before each step the triple is relabeled so that \(x=\sigma_r(u_1)> y=\sigma_r(u_2)> z=\sigma_r(u_3)>0\) (this is the sorting convention of the \(\sin^2\)-algorithm; the strictness is Lemma~\ref{lem:strict-order}). Implementation note: the mass-campaign engine sorts on 160-digit values with a stable sort, while the exact replays and the plastic-instance verifier decide the comparisons exactly.
\end{definition}

\begin{lemma}[strict real ordering]\label{lem:strict-order}
In any state whose coordinate determinant is nonzero, the three real embeddings are pairwise distinct. Consequently every admissible state sorts strictly as \(x>y>z>0\); for every integer \(0\le b\le\lfloor y/z\rfloor\) one has \(y/z-b>0\); and the move \(V(0,0,1)\) is always admissible before the isotropic exclusion of \S2.2. In particular the candidate set before that exclusion is never empty.
\end{lemma}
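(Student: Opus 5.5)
The key point is that \(\sigma_r\) is injective on \(K\). If \(\sigma_r(u_i)=\sigma_r(u_j)\) for some \(i\ne j\), then \(\sigma_r(u_i-u_j)=0\), and since \(\sigma_r\) is a field embedding this forces \(u_i=u_j\) in \(K\). The coordinate vectors of \(u_i\) and \(u_j\) in the basis \((1,\alpha,\alpha^2)\) would then coincide, so the coordinate matrix would have two equal rows and vanishing determinant. This contradicts the hypothesis. The determinant is nonzero throughout any run, because its absolute value \(d_0\) is fixed by the seed and preserved by the unimodular moves (Definition~\ref{def:state}), and for the seed \((1,\alpha,\alpha^2)\) it equals \(1\). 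So every state reached has pairwise distinct real embeddings. For an admissible state they are also positive, and after relabeling we get \(x>y>z>0\).

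For the claim about \(b\), we use \(z>0\). Set \(q=y/z\), which is real and satisfies \(q>1\). For \(0\le b\le\lfloor q\rfloor\) we have \(q-b\ge q-\lfloor q\rfloor\ge 0\), and we need the inequality to be strict. Equality can only occur at \(b=\lfloor q\rfloor\) with \(q\in\mathbb{Z}\). In that case \(y=bz\), so \(\sigma_r(u_2-bu_3)=0\), and injectivity again gives \(u_2=bu_3\) in \(K\). The rows of the coordinate matrix would then be linearly dependent, contradicting nonzero determinant. Hence \(y/z-b>0\) for every such \(b\). The same argument shows that \(y/z\) is never an integer, and, more generally, that no nontrivial integer relation among \(x,y,z\) holds.

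For \(V(0,0,1)\), I would unpack the definition of the moves \(V(a,b,c)\) from the \(\sin^2\)-algorithm as the paper presents it. I expect \(V(0,0,1)\) to be the move replacing \(z\) (or \(y\)) by a difference such as \(y-z\), whose real embedding is positive by the strict ordering just established. Its other entries are among \(x,y,z\), which are positive, so the new triple is admissible. The candidate set before the isotropic exclusion therefore contains at least this move and is nonempty.

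The main obstacle is this last step. The precise parametrization of \(V(a,b,c)\) is not yet stated at this point in the paper, so the admissibility check depends on matching the exact formula of the move. The check itself should reduce to the positivity \(y/z-b>0\) with \(b=0\) or \(1\), or to \(y-z>0\), both of which were proved above. Everything else is the injectivity of the real embedding combined with nonvanishing of the determinant.
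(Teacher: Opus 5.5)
Your first two parts are the paper's argument: injectivity of \(\sigma_r\) together with the nonzero determinant gives \(x>y>z>0\), and \(y/z=b\in\mathbb{Z}\) would force \(u_2=bu_3\).

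The gap is the last claim, which you do not prove, and your guess about the move is wrong. The family \(V(a,b,g)\) is defined in \S2.1 immediately after the lemma, so the parametrization is available. Every \(V(a,b,g)\) has third row \((0,0,1)\), so no \(V\)-move alters \(z\). Acting on the sorted triple, \(V(a,b,g)\) gives \((x-az-g(y-bz),\,y-bz,\,z)\). In particular \(V(0,0,1)\) sends \((x,y,z)\) to \((x-y,y,z)\): it replaces \(x\), not \(y\) or \(z\).

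The relevant inequality is therefore neither \(y-z>0\) nor \(y/z-b>0\) with \(b=1\), but \(x>y\). It is needed twice:
\begin{enumerate}
\item For \(g=1\) to lie in the admissible range, which at \(a=b=0\) reads \(1\le\lfloor (x/z)/(y/z)\rfloor=\lfloor x/y\rfloor\). This holds because \(x/y>1\).
\item For the image \((x-y,y,z)\) to be positive.
\end{enumerate}
Both follow from the strict ordering you already proved, so the repair is a one-line computation. As written, though, ``the check should reduce to'' is not a proof, and the inequalities you name are not the ones the move requires.
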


\begin{proof}
If \(\sigma_r(u_i)=\sigma_r(u_j)\), then \(\sigma_r(u_i-u_j)=0\). Since \(\sigma_r\) is an embedding of fields, it is injective, so \(u_i=u_j\), contradicting the nonzero determinant. Thus the sorted positive real embeddings are strict. If \(y/z=b\in\mathbb{Z}\), then \(\sigma_r(u_2-bu_3)=0\), hence \(u_2=bu_3\), again contradicting the determinant; this proves \(y/z-b>0\) throughout the stated range. Finally \(x/y>1\), so for \(a=b=0\) the bound for \(g\) is at least \(1\), and \(V(0,0,1)\) sends the real triple to \((x-y,y,z)\), which is positive.
\end{proof}

The \emph{moves} are the unimodular matrices of Karpenkov's algorithm, acting on the sorted basis: the family
\[
V(a,b,g)=
\begin{pmatrix}
1 & -g & gb-a\\
0 & 1 & -b\\
0 & 0 & 1
\end{pmatrix}
\]
for integers \(a,b,g\) in the admissible ranges \(0\le a\le \lfloor x/z\rfloor\), \(0\le b\le \lfloor y/z\rfloor\), and \(0\le g\le \lfloor (x/z-a)/(y/z-b)\rfloor\), excluding \((a,b,g)=(0,0,0)\) (the denominator is positive by Lemma~\ref{lem:strict-order}); and the single matrix
\[
W=
\begin{pmatrix}
1 & -1 & 0\\
0 & 1 & 0\\
-1 & 1 & 1
\end{pmatrix},
\]
admissible when \(z>x-y>0\).

These are exactly the candidate moves of the totally real \(\sin^2\)-algorithm \cite{Ka24}, transcribed without modification; our fidelity fixture is Example 1.21 of that paper (the algorithm as implemented reproduces its published expansion: pre-period 3, period 8). The \emph{candidate set} at a state is, by definition, the set of images of the state under the admissible moves above, minus the isotropic candidates (\(\langle n_1,n_1\rangle=0\), where the score of \S2.2 is undefined); the exclusion is part of the definition, not a runtime guard (\S2.2).

\subsection{The score and the minimization branch}\label{subsec:score}

For a candidate next state \(s'=(u'_1,u'_2,u'_3)\), write
\[
\xi=(\sigma_r(u'_1),\sigma_r(u'_2),\sigma_r(u'_3))\in\mathbb{R}^3,
\qquad
\nu=(\sigma_c(u'_1),\sigma_c(u'_2),\sigma_c(u'_3))\in\mathbb{C}^3.
\]
With \(\times\) and \(\langle\cdot,\cdot\rangle\) the formal (bilinear, non-Hermitian) cross and dot products on \(\mathbb{C}^3\), set \(n_1=\xi\times\nu\), \(n_2=\xi\times\bar{\nu}\), and

\[
\operatorname{score}(s') =
\operatorname{Re}\left[
\frac{\langle n_1\times n_2,n_1\times n_2\rangle}
{\langle n_1,n_1\rangle\langle n_2,n_2\rangle}
\right].
\]

In the totally real case (the second and third real embeddings in place of \(\nu\) and \(\bar{\nu}\)) this expression is the squared sine of the dihedral angle between the two planes they span with \(\xi\), the quantity whose \emph{maximization} defines the \(\sin^2\)-algorithm (Definition 1.16: the move of greatest \(\sin\alpha\)). On (1,1) embedding data the score has the following closed form.

\begin{lemma}[closed form and negativity of the score]\label{lem:closed-score}
Let \(s'\) be a candidate next state whose coordinate determinant is nonzero and for which \(\langle n_1,n_1\rangle\ne0\). Then

\[
\operatorname{score}(s') =
-\frac{|\det(\xi,\nu,\bar\nu)|^2\,|\xi|^2}
{|\langle n_1,n_1\rangle|^2}
\qquad(\langle n_1,n_1\rangle\ne 0).
\]
In particular the score is real and strictly negative.
\end{lemma}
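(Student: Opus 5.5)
The plan is to evaluate numerator and denominator separately using purely algebraic identities for the bilinear cross and dot products on \(\mathbb{C}^3\), and then to use that \(\xi\) is real. The key observation is that all the classical vector identities are polynomial identities in the coordinates. They therefore remain valid verbatim for the formal bilinear (non-Hermitian) products on \(\mathbb{C}^3\).

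First, for the numerator I use the identity \((a\times b)\times(a\times c)=\det(a,b,c)\,a\), which follows from the BAC--CAB rule \(u\times(v\times w)=v\langle u,w\rangle-w\langle u,v\rangle\) together with \(\langle a\times b,a\rangle=0\). Applied with \(a=\xi\), \(b=\nu\), \(c=\bar\nu\), it gives \(n_1\times n_2=D\,\xi\), where \(D=\det(\xi,\nu,\bar\nu)\). Hence \(\langle n_1\times n_2,n_1\times n_2\rangle=D^2\langle\xi,\xi\rangle=D^2|\xi|^2\), since \(\xi\in\mathbb{R}^3\). Complex conjugation fixes \(\xi\) and swaps the last two columns of the determinant, so \(\bar D=-D\). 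Thus \(D\) is purely imaginary and \(D^2=-|D|^2\).

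Second, for the denominator I note that \(\xi\) is real, so \(n_2=\xi\times\bar\nu=\overline{\xi\times\nu}=\bar n_1\). By bilinearity this gives \(\langle n_2,n_2\rangle=\overline{\langle n_1,n_1\rangle}\), and so \(\langle n_1,n_1\rangle\langle n_2,n_2\rangle=|\langle n_1,n_1\rangle|^2\), which is positive by the hypothesis \(\langle n_1,n_1\rangle\ne0\). The quotient is therefore already the real number \(-|D|^2|\xi|^2/|\langle n_1,n_1\rangle|^2\), and taking the real part changes nothing. This yields the closed form.

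Third, strict negativity requires \(D\ne0\) and \(\xi\ne0\); this is the only step needing arithmetic input, and I expect it to be the main (though mild) obstacle. Write \(C\) for the integer coordinate matrix of \(s'\) in the basis \((1,\alpha,\alpha^2)\), whose determinant is nonzero by hypothesis. Then the matrix with columns \(\xi,\nu,\bar\nu\) equals \(C\) times the matrix of embeddings \(\sigma_r,\sigma_c,\bar\sigma_c\) of \(1,\alpha,\alpha^2\), suitably transposed. The latter is a Vandermonde matrix in the three distinct roots of the separable polynomial \(p\), hence invertible, so \(D=\pm\det C\cdot V\ne0\). Finally, \(\xi\ne0\) because \(\xi\) is a column of this invertible matrix (or simply because \(\sigma_r\) is injective and the \(u'_i\) are not all zero).
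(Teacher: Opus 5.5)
Your proof is correct and follows the same route as the paper. You use the identity \((a\times b)\times(a\times c)=\det(a,b,c)\,a\) with \(n_2=\bar n_1\), the fact that conjugation negates \(\det(\xi,\nu,\bar\nu)\), and the nonvanishing of that determinant as the product of the coordinate determinant and the Vandermonde (embedding) determinant; you also spell out steps the paper leaves implicit, namely that the denominator equals \(\lvert\langle n_1,n_1\rangle\rvert^2\) and that \(\xi\ne 0\).
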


\begin{proof}
Since \(\xi\) is real, \(n_2=\bar n_1\). The Lagrange identity \((a\times b)\times(a\times c)=a\cdot\det(a,b,c)\) (a polynomial identity, valid for the bilinear products over \(\mathbb{C}\)) gives \(n_1\times n_2=\xi\cdot\det(\xi,\nu,\bar{\nu})\), and \(\det(\xi,\nu,\bar{\nu})\) is purely imaginary because conjugation swaps \(\nu\) and \(\bar{\nu}\), hence negates it. The displayed formula follows. Finally, \(\det(\xi,\nu,\bar{\nu})\) equals the determinant of the coordinate matrix times the embedding determinant of the basis \((1,\alpha,\alpha^2)\), which is nonzero since \(p\) is irreducible; the denominator is nonzero by hypothesis.
\end{proof}

In the algorithm the coordinate determinant is nonzero and preserved by the unimodular moves. The bilinear quantity \(\langle n_1,n_1\rangle\) can in principle vanish on nonzero isotropic vectors; by definition such a candidate is inadmissible; the condition is decided exactly (\S2.3), and across all certified campaigns no isotropic candidate ever occurred and no candidate set ever became empty after this removal. (Totality is in fact settled a posteriori --- the candidate set never empties; see the remark after Conjecture~T*. The clause is kept inside T* only for self-containedness.) The selection convention, stated explicitly: the algorithm selects the candidate of \emph{most negative} score. Since \(\lvert\det(\xi,\nu,\bar{\nu})\rvert\) is constant across the candidates of any given step --- all the equivalence requires --- and indeed along the whole trajectory (the moves are unimodular and the engine stores exact integer states throughout, with no rescaling), this is equivalent to \emph{maximizing} the ratio \(\lvert\xi\rvert^2/\lvert\langle\xi\times\nu,\xi\times\nu\rangle\rvert^2\): the selected candidate is the one whose bilinear Pl\"ucker norm \(\lvert\langle\xi\times\nu,\xi\times\nu\rangle\rvert\) is smallest relative to the real-embedding size \(\lvert\xi\rvert\), at constant covolume. \emph{Why the most-negative branch?} Two reasons. Structurally, the uniform rule ``select the move of greatest \(\lvert\operatorname{score}\rvert\)'' reduces exactly to Definition~1.16 on totally real data (where the score is the nonnegative \(\sin^2\)) and to the most-negative branch on (1,1) data (where the score is negative); it is the continuation of the invariant \(\lvert\operatorname{score}\rvert\), not an ad hoc choice. Empirically, the opposite branch (least negative) fails outright: it systematically favours candidates of \emph{large} height (an empirical finding, consistent with the measured score--height law of \S5.3 through Lemma~\ref{lem:closed-score}), and an 8-step probe on the base cubics shows state heights reaching 10--20 decimal \emph{digits} with enumeration sizes up to \(1.1\cdot 10^5\) candidates per step and no return (the plastic field, whose canonical orbit has height 1 under the adopted rule, reaches 11-digit heights); the probe table ships in the archive. The most-negative branch is the only one of the two that behaves as a reduction.

This implements the analytic-extension suggestion of \cite[\S9, Problem~4]{Ka24}: selecting the most negative score --- equivalently the greatest \(\lvert\operatorname{score}\rvert\) --- is the sign-coherent continuation of the real-case maximization; it is also consistent in spirit with the Markov--Davenport-minimizing heuristic of the survey \cite{Ka22}.

\begin{proposition}[scores are exact]\label{prop:exact-scores}
Every candidate score lies in the real field \(\mathbb{Q}(\alpha)\), and every difference of candidate scores is decidable exactly. Indeed the complex-embedding data reduces to \(\mathbb{Q}(\alpha)\): writing \(\sigma_c(\alpha)=\operatorname{Re}\beta+J\) with \(\operatorname{Re}\beta=-(a_2+\alpha)/2\) and \(J^2=(\operatorname{Re}\beta)^2+a_0/\alpha\in\mathbb{Q}(\alpha)\) (for \(p=x^3-x-1\) this reads \(\sigma_c(\alpha)=-\alpha/2+J\), \(J^2=1-3\alpha^2/4\)), the score of any candidate is a rational expression in \(\sigma_c(\alpha)\) invariant under the conjugation \(J\mapsto -J\), hence an element of \(\mathbb{Q}(\alpha)\); and comparisons in \(\mathbb{Q}(\alpha)\) are decided exactly (\S2.3).
\end{proposition}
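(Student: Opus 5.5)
The plan has three steps. First, identify the complex embedding through the real root. Second, write the score as a symmetric rational expression in \(\sigma_c(\alpha)\) and \(\overline{\sigma_c(\alpha)}\). Third, reduce exact comparison to sign determination in \(\mathbb{Q}(\alpha)\).

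\textbf{Step 1: the complex roots.} Write \(p=x^3+a_2x^2+a_1x+a_0\). Then \(\beta=\sigma_c(\alpha)\) and \(\bar\beta\) are the roots of the quadratic \(p(x)/(x-\alpha)\). Vieta gives
\[
\beta+\bar\beta=-(a_2+\alpha),\qquad \beta\bar\beta=-a_0/\alpha .
\]
Hence \(\operatorname{Re}\beta=-(a_2+\alpha)/2\). Setting \(J=\beta-\operatorname{Re}\beta\), which is purely imaginary, we get \(J^2=(\operatorname{Re}\beta)^2-\beta\bar\beta=(\operatorname{Re}\beta)^2+a_0/\alpha\). Both quantities lie in \(\mathbb{Q}(\alpha)\). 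For \(x^3-x-1\) this gives \(\beta\bar\beta=1/\alpha\), and the relation \(1/\alpha=\alpha^2-1\) recovers \(J^2=1-3\alpha^2/4\).

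\textbf{Step 2: the score is symmetric.} For a candidate with integer coordinates \(c_{ij}\), we have \(\xi_i=\sum_j c_{ij}\alpha^j\) and \(\nu_i=\sum_j c_{ij}\beta^j\), so \(\bar\nu_i\) is the same polynomial in \(\bar\beta\). The score is \(\operatorname{Re}\) of a rational function \(R(\alpha,\beta,\bar\beta)\) with rational coefficients, where \(R\) is symmetric in \((\beta,\bar\beta)\) because \(n_1\) and \(n_2\) swap.

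Rather than argue about taking real parts, I would use Lemma~\ref{lem:closed-score}. The score equals \(-\det(\xi,\nu,\bar\nu)\,\overline{\det}\,|\xi|^2/(\langle n_1,n_1\rangle\langle n_2,n_2\rangle)\), since \(\langle n_2,n_2\rangle=\overline{\langle n_1,n_1\rangle}\). Moreover \(\det(\xi,\nu,\bar\nu)\,\overline{\det}=-\det^2\), and \(\det^2\) is symmetric. Each of these factors is a polynomial in \(\alpha,\beta,\bar\beta\) with rational coefficients, symmetric under \(\beta\leftrightarrow\bar\beta\), that is, under \(J\mapsto-J\). Expanding in the basis \(1,J\) over \(\mathbb{Q}(\alpha)\), reducing \(J^2\) by Step 1, shows that each factor lies in \(\mathbb{Q}(\alpha)\). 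The denominator is nonzero by the isotropic exclusion, so the quotient lies in \(\mathbb{Q}(\alpha)\).

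\textbf{Step 3: decidability.} A difference of two scores is then an explicit element \(\gamma=r_0+r_1\alpha+r_2\alpha^2\) with \(r_i\in\mathbb{Q}\), and its sign must be decided exactly. I would first test whether \(\gamma=0\), which is exact coordinate arithmetic. If \(\gamma\neq0\), its real embedding is nonzero, and I would isolate \(\alpha\) in a rational interval by Sturm sequences, refining until interval arithmetic certifies the sign of \(\gamma\). Termination follows because \(\gamma\neq0\) and the interval width tends to zero. Alternatively, one can bound \(|\gamma|\) from below using \(N(\gamma)\neq0\) together with bounds on the conjugates.

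\textbf{Main obstacle.} The delicate point is Step 2: rigorously justifying that "real part" coincides with a \(J\mapsto-J\)-invariant expression. Routing the argument through the closed form of Lemma~\ref{lem:closed-score} is what makes the invariance transparent.
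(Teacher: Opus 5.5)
Your proposal is correct and follows the paper's own argument. It uses the same decomposition \(\sigma_c(\alpha)=\operatorname{Re}\beta+J\) with \(\operatorname{Re}\beta,\,J^2\in\mathbb{Q}(\alpha)\) from Vieta, and the same invariance under \(J\mapsto -J\) to place the score in \(\mathbb{Q}(\alpha)\). It also uses the same decision procedure as \S2.3: an exact zero test, then interval refinement.

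Your one variation is to pass through the closed form of Lemma~\ref{lem:closed-score}, so the score is already the symmetric quotient \(\det(\xi,\nu,\bar\nu)^2\lvert\xi\rvert^2/(\langle n_1,n_1\rangle\langle n_2,n_2\rangle)\) and no real part needs to be taken. This is a clean way to make the invariance transparent, and it is consistent with the paper's exact replays, which also evaluate scores from the closed form of \S2.2.
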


In practice scores are evaluated at 160 decimal digits, and the resulting selections have been re-certified exactly for the mass and box campaigns and for the plastic closure (\S2.3; the three other closure tables are 160-digit computations, \S6.3, and the auxiliary campaigns are inventoried in \S8.3); the certificates of \S2.4 are independent of working precision.

\subsection{Ties and the deterministic convention}\label{subsec:ties}

The selection rule alone does not define an algorithm: exact score ties occur. Mathematically, a \emph{tie} is an exact equality of candidate scores --- decidable, since scores lie in \(\mathbb{Q}(\alpha)\) (Proposition~\ref{prop:exact-scores}); the algorithm is defined with exact comparisons throughout. (Implementation note: the mass campaign evaluates scores at 160 decimal digits and routes relative gaps below \(10^{-60}\) to the tie-handling branch; all 2265 transition steps of the 205 mass-campaign trajectories, and all 2285 transitions of the \S6 plastic graph, were replayed with exact comparisons and confirmed without exception; the replay recomputes the full candidate set --- floor bounds, \(W\)-move admissibility signs, isotropy checks --- not merely the score comparisons, so the certification covers enumeration and selection alike. The threshold is a trigger, not part of the definition.) A concrete example from the plastic closure: at the state with coordinate rows \((-1,-1,2),(-1,1,1),(0,-1,1)\), the three candidates \(V(0,1,2)\), \(V(2,2,2)\), \(V(2,2,3)\) have exactly equal scores in \(\mathbb{Q}(\alpha)\), and the ordering selects \(V(0,1,2)\). Remark 1.18 (arXiv version of \cite{Ka24}) already proposes determinizing by an ordering: standard lexicographic on the \(V\) moves, together with a rank for \(W\). Our convention, fixed in advance of all experiments reported here, follows that suggestion --- tied \(V\) moves are ordered lexicographically by \((a,b,g)\) --- with one variation: a tied \(W\) move is given priority over tied \(V\) moves (the reverse of the rank suggested there). The variation is immaterial on all our data: no observed minimum tie requiring the tie-break, across all campaigns, involves \(W\) (see the archive tie logs for the per-campaign tallies). Our contribution on ties is not the convention but their exact algebraic certification (below). On the 17-cubic determinization sample (the seven cubics of the initial complex campaign plus ten enumerated additions; archive), tie steps occur in \(x^3+x-1\) (2 steps) and \(x^3+2x-1\) (1 step); on the tie-bearing polynomial \(x^3+x-1\) the digit output was checked bit-identical at 120 and 240 digits of working precision.

Ties are not a numerical artifact, and for the instance of Section 6 the floating comparison has been eliminated altogether. Since the score of every candidate is an element of \(\mathbb{Q}(\alpha)\) (\S2.2), score comparisons are decidable exactly. Re-deciding all 2285 transitions of the Section 6 state set with certified dyadic-interval arithmetic in \(\mathbb{Q}(\alpha)\) --- declaring a tie only upon exact vanishing of the score difference --- confirms every recorded digit and every recorded target state (0 discrepancies, 2,974,862 candidates enumerated), and reveals that exact algebraic ties of the minimum occur at 156 of the 2285 states. Two consequences: the transition map of Section 6 depends on no floating-point threshold; and the tie-breaking convention is a \emph{constitutive} part of the algorithm --- on this state set, exact minimum ties are common (6.8\% of states), so the minimization rule alone genuinely underdetermines the dynamics. The convention is declared, not intrinsic: nothing in the score singles it out, and OP5 (\S8.2) asks for a classification of the exact ties that would either ground it structurally or replace it. Working precision of 160 digits sufficed to separate every non-tied comparison, a comfortable certification margin.

\subsection{Periodicity certificates}\label{subsec:periodicity-certificates}

A run is declared \emph{periodic} only upon an exact certificate. If the canonical forms of the states (Definition~\ref{def:canonical-form}, \S6.1) at steps \(m\) and \(m+P\) coincide projectively, the scalar \(\lambda\in K\) with \(s_{m+P}=\lambda\cdot s_m\) (componentwise, in exact field arithmetic) is computed and four checks are performed in PARI: the same \(\lambda\) relates all three components; \(\lvert N_{K/\mathbb{Q}}(\lambda)\rvert=1\); \(\mathbb{Q}(\lambda)=K\); and \(\lambda\in\mathcal{O}_K^\times\) --- verified constructively by computing its bnfisunit decomposition \(\lambda=\zeta\cdot\varepsilon^k\), which succeeds on all 205 certificates below. (Both states lie in the lattice \(L\) by construction --- the engine stores exact integer coordinates at every step and never rescales --- so no assumption that multiplication by \(\lambda\) preserves \(L\) is needed; the certified statement is a return within \(L\) up to a unit of \(\mathcal{O}_K\).) Only then is the pair (pre-period \(m\), period \(P\)) recorded, together with \(\lambda\) (indexing convention: states are indexed from the seed \(s_0\), as in \S2.1; the recorded pre-period \(m\) is the index of the first state whose canonical form recurs, clamped below at 1 --- a run whose seed already lies on the terminal cycle records \(m=1\), as does a run with exactly one transient state --- and the certificate identity \(s_{m+P}=\lambda\cdot s_m\) holds at the recorded \(m\)). The certificate is therefore not a numerical near-return: it exhibits the unit. Its dynamical consequence deserves to be stated explicitly: a projective return propagates, so the certificate implies periodicity of the digit sequence itself.

\begin{lemma}[projective return implies digit periodicity]\label{lem:equivariance}
Let \(s\) be an admissible state and \(\lambda\in K^\times\) with \(\sigma_r(\lambda)>0\). Then \(s\) and \(\lambda\cdot s\) have the same sorting permutation, the same admissible-move ranges, the same \(W\)-move admissibility, and the same isotropic exclusions; and since the moves are linear (\(M\cdot(\lambda s)=\lambda\cdot(Ms)\)), corresponding candidates have equal scores, so the selected digit at \(\lambda\cdot s\) equals the selected digit at \(s\) and the successor states again differ by the factor \(\lambda\). Consequently a certified return \(s_{m+P}=\lambda\cdot s_m\) with \(\sigma_r(\lambda)>0\) gives \(s_{t+P}=\lambda\cdot s_t\) and \(\mathrm{digit}_{t+P}=\mathrm{digit}_t\) for all \(t\ge m\): the digit sequence is eventually periodic, with period dividing \(P\).
\end{lemma}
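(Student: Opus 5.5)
The plan is to check, one at a time, that every ingredient entering a single step of the algorithm is invariant under multiplication by \(\lambda\), and then to conclude by induction on \(t\). Write \(c=\sigma_r(\lambda)>0\) and \(w=\sigma_c(\lambda)\in\mathbb{C}^\times\). Since \(\sigma_r\) and \(\sigma_c\) are ring homomorphisms, the real data of \(\lambda s\) is \(c\,\xi\), where \(\xi\) is the real data of \(s\). The complex data of \(\lambda s\) is \(w\,\nu\), and its conjugate is \(\bar w\,\bar\nu\).

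\textbf{Step 1: sorting and move ranges.} Multiplying by \(c>0\) preserves positivity and strict order, so the sorting permutation is unchanged. It also preserves the ratios \(x/z\), \(y/z\) and \((x/z-a)/(y/z-b)\). Hence the floor bounds, and therefore the admissible triples \((a,b,g)\), are the same. The \(W\)-condition \(z>x-y>0\) is homogeneous of degree one, so it too is invariant. Consequently the set of moves \(M\) considered at \(s\) and at \(\lambda s\) is the same set of integer matrices. Because each move acts by integer linear combinations of the components, \(M(\lambda s)=\lambda(Ms)\).

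\textbf{Step 2: isotropy and scores.} For a candidate \(s'=Ms\), the candidate at \(\lambda s\) has embedding data \((c\xi',w\nu')\). By bilinearity of the cross product, \(n_1\) scales by \(cw\) and \(n_2\) by \(c\bar w\). Therefore \(\langle n_1,n_1\rangle\) scales by \(c^2w^2\ne0\), so the isotropic exclusions coincide. In the score, the numerator \(\langle n_1\times n_2,n_1\times n_2\rangle\) scales by \((c^2|w|^2)^2\), and the denominator \(\langle n_1,n_1\rangle\langle n_2,n_2\rangle\) scales by \(c^4w^2\bar w^2\), which is the same factor. So the score ratio is exactly invariant. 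Alternatively, one can read this off the closed form of Lemma~\ref{lem:closed-score}, where the numerator scales by \(c^6|w|^4\) and the denominator by \(c^4|w|^4\) times \(c^2\) from \(|\xi|^2\). Either way the scores are equal candidate by candidate.

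\textbf{Step 3: selection and induction.} Since the candidate lists coincide and the scores are equal, the most-negative selection coincides. The ties also coincide, and the declared lexicographic/\(W\)-priority rule depends only on the move labels, so it picks the same move. Hence the selected digit at \(\lambda s\) equals the digit at \(s\), and the successor is \(\lambda\) times the successor of \(s\), after the same relabeling by Step 1. Given \(s_{m+P}=\lambda s_m\), induction on \(t\ge m\) yields \(s_{t+P}=\lambda s_t\) and \(\mathrm{digit}_{t+P}=\mathrm{digit}_t\). The digit sequence from index \(m\) on is therefore \(P\)-periodic. Its minimal period \(P_0\) divides \(P\), by the standard fact that the minimal period of a sequence divides every period.

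The main point needing care is Step 1. The candidate set must be identified as the same set of abstract moves, not merely as isomorphic sets, so that the tie-breaking convention transfers. This is exactly why the hypothesis \(\sigma_r(\lambda)>0\) is needed: a negative \(c\) would reverse the sorting and destroy admissibility. A secondary check is that "equal scores" must mean exact equality in \(\mathbb{Q}(\alpha)\), as in Proposition~\ref{prop:exact-scores}, so that the exact tie structure is preserved and not only approximate numerical values.
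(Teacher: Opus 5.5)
Your proof is correct and follows essentially the same route as the paper. Both arguments rest on the same ingredients: for \(\sigma_r(\lambda)>0\), the sorting, the floor ranges and the \(W\)-condition are invariant because they depend only on ratios and positivity of the real embeddings. The score and isotropy are homogeneous of degree zero, the label-based tie rule is state-independent, and induction on \(t\) finishes. Your primary score check, scaling the defining bilinear expression directly (numerator and denominator both by \(c^4\lvert w\rvert^4\)), is fine and avoids Lemma~\ref{lem:closed-score} altogether. Only your ``alternative'' closed-form bookkeeping is off. There \(\lvert\xi\rvert^2\) sits in the numerator, and both \(\lvert\det(\xi,\nu,\bar\nu)\rvert^2\lvert\xi\rvert^2\) and \(\lvert\langle n_1,n_1\rangle\rvert^2\) scale by \(c^4\lvert w\rvert^4\), not \(c^6\lvert w\rvert^4\). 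Nothing in your argument depends on that aside.
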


\begin{proof}
The sorting comparisons are strict by Lemma~\ref{lem:strict-order}, and the enumeration bounds \(\lfloor x/z\rfloor\), \(\lfloor y/z\rfloor\), \(\lfloor(x/z-a)/(y/z-b)\rfloor\) and the \(W\)-move condition \(z>x-y>0\) depend only on ratios of the positive reals \(\sigma_r(u_i)\), unchanged when each \(u_i\) is multiplied by \(\lambda\) with \(\sigma_r(\lambda)>0\). The score is homogeneous of degree zero: under \(u\mapsto\lambda u\), both \(\lvert\det(\xi,\nu,\bar\nu)\rvert^2\,\lvert\xi\rvert^2\) and \(\lvert\langle n_1,n_1\rangle\rvert^2\) scale by \(\sigma_r(\lambda)^4\lvert\sigma_c(\lambda)\rvert^4\), so Lemma~\ref{lem:closed-score} is invariant; isotropy is preserved for the same reason. The tie ordering is defined on move labels, hence state-independent. Finally, for a certificate \(\sigma_r(\lambda)>0\) holds automatically: \(\lambda\) is a ratio of elements with positive real embeddings.
\end{proof}

\subsection{The algorithm in one block}\label{subsec:pseudocode}
For reference, one full step of the deterministic algorithm:

\begin{quote}\ttfamily\small
Input: state $s=(u_1,u_2,u_3)$, integer vectors in the basis $(1,\alpha,\alpha^2)$.\\
1. Sort $s$ so that $\sigma_r(u_1)>\sigma_r(u_2)>\sigma_r(u_3)$; require all three positive.\\
2. Enumerate the admissible moves: $V(a,b,g)$ over the integer ranges of \S2.1, and $W$ if $z>x-y>0$.\\
3. For each candidate state, discard it if $\langle n_1,n_1\rangle=0$; otherwise compute its score (\S2.2).\\
4. Select the candidate of most negative score; on an exact tie, apply the declared ordering (\S2.3).\\
5. Output the digit ($V(a,b,g)$ or $W$) and the new state; the permutation of step 1 is not part of the digit.\\
Periodicity is detected on canonical forms (\S2.4), and every selected transition of the mass and box campaigns, and of the plastic closure, has been replayed with exact arithmetic (the other closure tables are computed at 160 digits, \S6.3): the candidate ranges (floor bounds), the $W$-move admissibility signs, the isotropy checks, and the score comparisons are all decided exactly in $\mathbb{Q}(\alpha)$.
\end{quote}

\section{Mass periodicity with exact unit certificates}\label{sec:mass}

\subsection{The sample}\label{subsec:sample}

The sample is fixed in advance and deterministic: the first 200 monic cubics \(x^3+a_2x^2+a_1x+a_0\), coefficients enumerated lexicographically by \((a_2,a_1,a_0)\) ascending from \((-6,-6,-6)\) within \([-6,6]^3\), restricted to irreducible polynomials of signature (1,1); plus five literature-tagged polynomials: \(x^3-2\), \(x^3-17\) and \(x^3-3x^2-2\) (the Jacobi--Perron unit computations of Adam--Rhin \cite{AR11}) and \(x^3-4\), \(x^3+2x^2+x+4\) (the two complex examples of Karpenkov; the latter has discriminant \(-416\)). Total: 205 polynomials. Distinct polynomials may generate isomorphic fields (they do: the 205 polynomials span 180 distinct fields, \S4.4); certificates are computed per polynomial. An exhaustive complementary campaign, with no sampling freedom at all, is reported in \S3.3.

\subsection{Result: 205/205 with certificates}\label{subsec:mass-result}

\begin{customthm}[certified finite computation]{A}\label{thm:A}
On the 205 listed polynomials, the deterministic algorithm (step cap 500, working precision 160 digits) yields: 195 periodic with valid certificate, and 10 runs halted with the diagnostic ``no positive third coordinate'' --- precisely the polynomials whose real root is negative, for which the sorting convention \(z>0\) cannot be met along the seed orbit. For those 10 we run the monic mirror \(q(x)=-p(-x)\) (same field, same discriminant, generator \(-\alpha\) with positive real root): all 10 are periodic with valid certificate. Thus: for 195 polynomials the stated seed itself yields a certified periodic expansion, and for the remaining 10 the sign-normalized mirror generator does (same field); the sign-normalized 205-input campaign succeeds \textbf{205/205}. Periods range over \([1,50]\) with median 8; pre-periods over \([1,5]\). No polynomial required more than the step cap, and no certificate check failed. Status, stated precisely: \emph{every} decision of \emph{every} run is exact-certified --- all 2265 transition steps of the 205 recorded trajectories were replayed with exact score comparisons in \(\mathbb{Q}(\alpha)\) (the field-wise closed form of \S2.2, re-derived and numerically verified per field), confirming the selected digit at every step with zero discrepancies. The 14 exact score ties encountered en route are algebraic equalities, all resolved to the recorded digit by the declared ordering, and 160 working digits sufficed to separate every non-tied comparison. The trajectories are therefore those of the exact algorithm, and the terminal unit certificates apply to it.
\end{customthm}

Theorem~\ref{thm:A} is the mass certificate statement used below.

\subsection{An exhaustive box campaign}\label{subsec:box}

The lexicographic sample of \S3.1, while fixed in advance, is concentrated: its 200 enumerated polynomials all have \(a_2\in\{-6,-5,-4\}\). To remove any sampling freedom we ran a complementary campaign on an \emph{exhaustive} family: all monic cubics with coefficients \((a_2,a_1,a_0)\in[-3,3]^3\), irreducible of signature (1,1) --- 194 polynomials, determined by the box alone. The box is closed under the mirror \(p(x)\mapsto -p(-x)\), i.e.\ \((a_2,a_1,a_0)\mapsto(-a_2,a_1,-a_0)\), and contains no self-mirror polynomial (a self-mirror would have \(a_2=a_0=0\), hence be divisible by \(x\)), so its 194 polynomials form 97 mirror pairs, each pair with exactly one positive-real-root member. The overlap with the 205-polynomial sample is two polynomials (\(x^3-2\) and \(x^3-3x^2-2\); their box runs reproduce the archived \S3.2 rows identically, and served as fixtures).

\begin{customthm}[certified finite computation]{A\('\)}\label{thm:Aprime}
On the 194 polynomials of the box, the sign-normalized campaign (same conventions and caps as Theorem~A) succeeds \textbf{194/194}: the 97 positive-root members close directly from the canonical seed, and each of the 97 negative-root members halts by sign and closes through its mirror --- the other member of its pair, with the identical digit sequence. The campaign thus certifies 97 representative expansions (82 distinct digit sequences) --- 420 distinct transition steps; counted over all 194 runs (each expansion reached from both members of its pair), the archive's exact replay covers 840 steps. Every transition of every one of the 194 recorded runs was replayed with exact comparisons in \(\mathbb{Q}(\alpha)\) --- enumeration bounds, \(W\)-move admissibility, isotropy and score comparisons --- with zero discrepancies. The 25 tie steps among the 420 distinct transitions (50 counted over all 194 runs) are exact algebraic equalities, all resolved to the recorded digit by the declared ordering. Certificate exponents lie in \(k\in[1,3]\), periods in \([1,10]\), pre-periods in \([1,4]\) --- within the ranges of Theorem~A.
\end{customthm}

Two readings. First, the sampling objection is closed: the box leaves no enumeration choice, and the phenomenon persists unchanged. Second, the mirror structure of the box makes the sign-normalization of Theorem~A self-contained: on a mirror-closed family, every halt-by-sign is resolved \emph{within} the family, and the halting diagnostic is exactly the sign of the real root, 97/97.

\subsection[The certificate unit and the closing identity]{The certificate unit: \texorpdfstring{$\lambda = \zeta\cdot\varepsilon^k$}{lambda = zeta epsilon^k} and the closing identity}\label{subsec:unit}

Writing each certificate as \(\lambda=\zeta\cdot\varepsilon^k\) with \(\varepsilon\) the fundamental unit (contracting in \(\sigma_r\), \(\operatorname{Reg}=-\log\lvert\sigma_r(\varepsilon)\rvert>0\)) and \(\zeta\in\{\pm 1\}\) the torsion part, the exponent distribution over the 205 runs is

\begin{center}
\begin{tabular}{rrrrrrr}
\toprule
\(k=1\) & \(k=2\) & \(k=3\) & \(k=4\) & \(k=6\) & \(k=12\) & \(k=14\)\\
\midrule
169 & 14 & 11 & 5 & 4 & 1 & 1\\
\bottomrule
\end{tabular}
\end{center}

with \(\zeta=+1\) on 109 and \(\zeta=-1\) on 96 runs. The certificate unit is thus fundamental (up to sign) in 82\% of the sample but not always: the extremes are \(x^3-5x^2-x-3\) (disc \(-1984\), period 8, \(k=14\)) and \(x^3-4x^2-3x-5\) (disc \(-2783\), period 3, \(k=12\)). The exponent is not an artifact of the search: after a certificate is found, the following closing identity gives an independent end-to-end check. Let \(\bar g\) be the mean per-step logarithmic gain of the real embedding over one period and \(P\) the period length. Then

\[
P\cdot \bar g=k\cdot\operatorname{Reg}
\]
holds with relative residual below \(10^{-78}\) on all 205 runs (recomputation at high precision, archived script; the regenerated tables of the archive carry the audited regulator column). The identity is essentially telescopic once the certificate holds; we report it as an end-to-end consistency check of the numerical chain. The geometric reading: over one period the algorithm contracts the real embedding by exactly the amount that multiplication by \(\zeta\cdot\varepsilon^k\) contracts it --- the period ``spends'' \(k\) regulators.

Figure~\ref{fig:kdist} records this distribution.

\begin{figure}[t]
\centering
\includegraphics[width=0.62\linewidth]{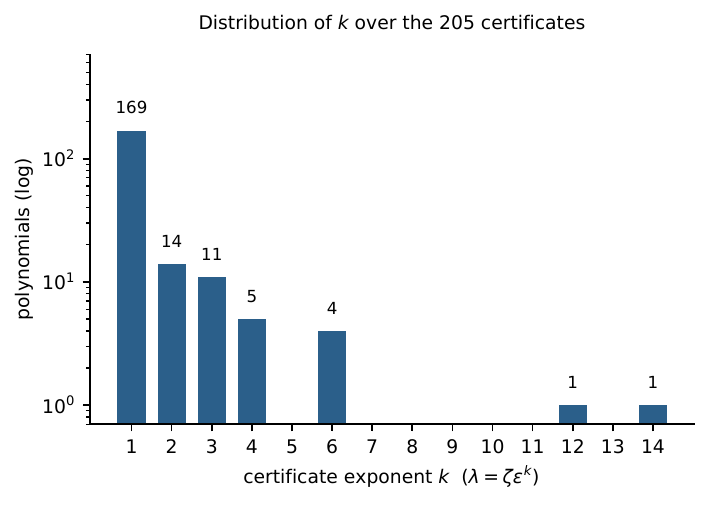}
\caption{Distribution of the certificate exponent \(k\) over the 205 certificates. The unit is fundamental (up to sign) in 82\% of the sample; the extremes \(k=12,14\) occur at non-maximal generators of fields that also carry \(k=1\) (\S4.4).}
\label{fig:kdist}
\end{figure}

\subsection{The rotation at the complex place}\label{subsec:rotation}

A basic obstruction for complex Hermite expansions is that a non-torsion certificate unit acts on the complex embedding by rotation of angle \(\theta(\lambda)\) with \(\theta/2\pi\) irrational: if \(\theta/2\pi\) were rational, a nonzero power of \(\lambda\) would have real complex embedding and hence would be rational, forcing \(\lambda\) to be torsion. Related complex-place non-closing phenomena are studied in \cite{DH25}. Our data is consistent: the measured \(\theta/2\pi\) shows no small-denominator rational within the detection bound on the tabulated runs (per-run diagnostic in the archive), and no correlation is detected between the period data and the Diophantine quality of \(\theta/2\pi\) (Spearman \(\rho=0.06\), \(p=0.38\), \(n=195\) --- the direct runs, mirrors excluded from this diagnostic; threshold \(\alpha=0.01\), fixed in advance; an exploratory diagnostic on a deterministic sample, carrying no inferential weight, \S5). Periodicity, when it occurs, is carried by the full state --- not by a closing-up at the complex place. This is a descriptive statement about our sample, not a theorem.

\subsection{Agreement with the published record}\label{subsec:published-record}

Where the literature provides comparison points, the certificates agree. The units of Adam--Rhin for \(x^3-2\) (disc \(-108\)) and \(x^3-3x^2-2\) (disc \(-324\)) are recovered with \(k=1\) (certificate unit equal to the published unit up to sign and inversion). For \(x^3-4\), the deterministic algorithm yields pre-period 1, period 4; Karpenkov's published HAPD computation of the same example has pre-period 6, period 4 --- same period, different pre-period, as expected of different algorithms. For disc \(-416\) (via its mirror), the \(\sin^2\)-expansion is (1, 4) against the published Jacobi--Perron (6, 2); the certificate unit is a unit of the same field in both readings. Different algorithms produce different expansions; the field-level invariant they must share --- the unit group --- is shared.

\subsection{Relation to sail periodicity}\label{subsec:sail-periodicity}

Geometric periodicity for cubic fields is classical: the Klein sail \cite{Kl1895} is periodic under the Dirichlet group, the period operator being multiplication by a unit. What the certificates above recover of that picture is exactly the unit: \(\lambda\in\mathcal{O}_K^\times\) is the scalar by which the state returns --- a period operator realized dynamically by the algorithm. What they do not establish is the combinatorial correspondence between the digit cycle and the sail structure (vertex set, fundamental domain) of the same field --- we have not compared the two, and regard that comparison as a natural next step (\S8).

\section{The canonical-cycle phenomenon}\label{sec:canonical-cycle}

\subsection{Deformation protocol}\label{subsec:deformation}

The seed state of Section 3 is one basis of \(L\) among infinitely many. To probe whether the terminal cycle depends on the starting basis, we deform: \(G\in\mathrm{SL}_3(\mathbb{Z})\) is applied to the seed (products of elementary matrices from seeded generators; protocol, seeds and acceptance rates in the archive), the deformed triple is run if admissible (positive sorted embeddings), and the terminal cycle is compared to the canonical one by \emph{exact equality of canonical state sequences modulo rotation}: the projective states along the two cycles (exact field-element ratios, the same canonicalization used for periodicity detection) must coincide as cyclic sequences. All 457 deformed runs reported below pass this state-level test --- each re-executed deterministically from the archived deformation matrix, reproducing its archived status, pre-period, period and certificate (457/457; comparison table in the archive). The digit cycle up to rotation and the multiset of state heights along the cycle, the runtime criterion of the original campaigns, are retained as secondary diagnostics; on no run do the two criteria disagree. On 30 fields (the Section 3 sub-sample with smallest discriminants): a first campaign of 300 seeded deformations yielded 71 admissible starts, 71/71 periodic, 71/71 on the canonical cycle; a second campaign forced coverage (10 admissible deformations per field, 1210 attempts): 300/300 periodic, 300/300 on the canonical cycle.

\subsection{Stress test at large heights}\label{subsec:stress}

A third campaign pushed the deformation heights: 90 runs with initial height \(H_0\) up to \(3.4\cdot 10^9\) times the cycle height (log-ratio up to 21.95). Of these, 86 completed: 86/86 periodic, 86/86 on the canonical cycle, certificates valid. The remaining 4 hit the stress-campaign enumeration guard (more than \(10^6\) candidate moves to enumerate at a single step) and were stopped without result --- an honest cost boundary of the method at extreme heights, not a counterexample; we quantify this enumeration hardness further in \S6.4.

Figure~\ref{fig:descent} displays the completed stress runs.

\begin{figure}[t]
\centering
\includegraphics[width=0.78\linewidth]{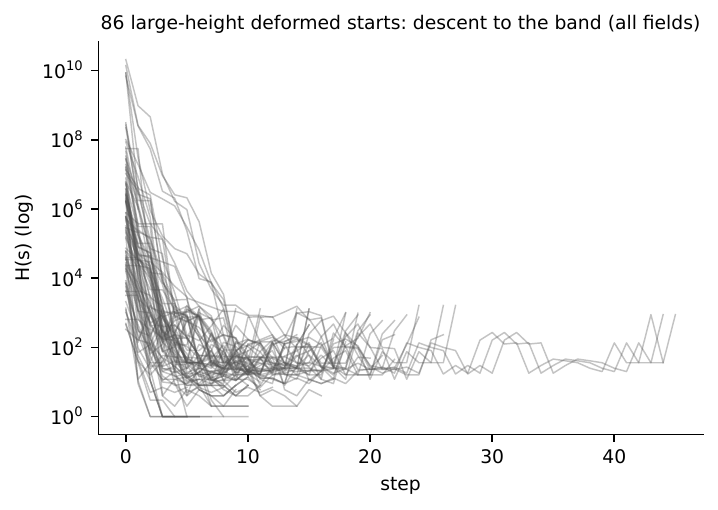}
\caption{The 86 completed large-height stress runs (all 30 fields, log scale): descent from seed heights up to \(3.4\cdot 10^9\) into the band within a few steps, then bounded oscillation. The two below-band starts rise exactly to \(B_K\) (\S5.1).}
\label{fig:descent}
\end{figure}

\subsection{Orbit-level closures}\label{subsec:orbit-closures}

Independently of deformations, for 12 fields (the 8 smallest of the determinization sample plus 4 quantile-selected by band height) the full forward closure of the canonical orbit was computed (\S6 method). Since the forward closure of a single orbit of a deterministic map has exactly one terminal cycle by construction, these runs are \emph{not} evidence for uniqueness of the attractor; their value is that the twelve canonical trajectories are verified exactly, and that they supply the per-field cycle data used in \S5.1. Evidence for uniqueness comes from the four \emph{complete} closures of \S6, whose seed sets contain all in-window admissible states and all deformation states. (Terminology: a \emph{complete closure instance} closes the full seed set; \emph{orbit-level} closes the canonical orbit alone.)

\subsection{The conjecture}\label{subsec:conjecture}

The cumulative record is 457/457 deformed starts reaching the canonical cycle, together with the four complete closures of \S6 (single cycle in each full state space). We state the phenomenon as a conjecture. What the data supports is invariance for the \emph{algorithm as fixed here} --- polynomial \(p\), lattice \(\mathbb{Z}[\alpha]\), sorting and tie-breaking conventions included:

\begin{conjecture}[T*: periodicity]\label{conj:Tstar}
For each monic complex cubic \(p\) (with \(L=\mathbb{Z}[\alpha]\) and the conventions of \S2), every admissible trajectory is well-defined indefinitely (in particular, the post-isotropy candidate set never becomes empty) and is eventually periodic.
\end{conjecture}

\begin{remark}[status of the well-definedness clause]
The nonemptiness clause of T* is in fact settled: Lemma~\ref{lem:strict-order}
always supplies the admissible candidate \(V(0,0,1)\) before isotropic
removal, and the companion theory paper proves that on \(\mathbb{Q}\)-basis
data no isotropic candidate ever arises \cite{Tag26b}, so the post-isotropy
candidate set is never empty. The clause
is retained in T* only to keep the periodicity statement self-contained; the
genuinely open content is the eventual periodicity.
\end{remark}

\begin{conjecture}[C*: canonical cycle]\label{conj:Cstar}
Moreover, all admissible trajectories reach the \emph{same} terminal cycle, an invariant of \((p,L,\mathrm{convention})\).
\end{conjecture}

Our own sample decides the field-level question, negatively. Grouping the 205 polynomials by field (polredabs): 180 distinct fields, 19 of them represented by several polynomials --- and in 18 of the 19 groups the terminal data \((k,\zeta,P)\) differs across generators. The field of \(\sqrt[3]{2}\) appears at \(x^3-2\), \(x^3-4\), \(x^3-6x^2-4\) and \(x^3-6x^2+6x-2\) with \(k=1,2,6,1\); the field of \(x^3+x-1\) carries both \(k=1\) and the sample maximum \(k=14\) at different polynomials. Even sharing both the field and the polynomial discriminant does not force agreement (5 such pairs disagree). The invariance supported by the data is thus exactly at the level stated in C*: robust under \(\mathrm{SL}_3(\mathbb{Z})\) change of basis of the embedded lattice (457/457, \S4.1--4.2), not under change of generator or embedding (OP8 asks for the right functorial description). C* is stronger than periodicity (it adds uniqueness of the attractor). Section 6 shows that, for a given field, the conjunction ``periodicity + canonical cycle over an explicit state set'' is \emph{decidable by finite computation}, and settles it for explicit sets in 12 fields (four of them by complete closure). No general proof is claimed.

\section{A measured reduction profile}\label{sec:reduction-profile}

A periodicity proof in the style of the totally real case needs a reduction theory: a height that eventually descends, a compact band where the dynamics is trapped, and finiteness of states in the band. This section reports the \emph{measured} profile of these three ingredients --- calibrated constants, sample sizes and exceptions stated exactly. These are measurements on samples fixed in advance, not theorems; we regard them as the quantitative specification a proof would have to reproduce. All quoted correlations are descriptive: the samples are deterministic and trajectories are not independent, so no inferential weight is claimed.

\subsection{The height and the band}\label{subsec:height-band}

For \(u\in L\setminus\{0\}\), \(H(u)=\lvert N_{K/\mathbb{Q}}(u)\rvert=\lvert\sigma_r(u)\rvert\cdot\lvert\sigma_c(u)\rvert^2\in\mathbb{Z}_{\ge 1}\); for a state, \(H(s)=\max_i H(u_i)\). We use \(H\) purely as a \emph{norm height}: it is unit-invariant, and on (1,1) embedding data it agrees with the Markov--Davenport form evaluated at \(u\) (no further use is made of that identification). The \emph{band} \(B_K\) is the maximum of \(H\) over the canonical cycle. Across the 205-polynomial sample, \(\log B_K\) fits \(1.372\cdot\log\lvert\mathrm{disc}\rvert-5.20\) with \(R^2=0.917\); on the 12 closure fields, the canonical excursion ceiling \(X^{*}_{\mathrm{can}}\) (maximum of \(H\) over the whole canonical orbit including pre-period) correlates with \(\log\lvert\mathrm{disc}\rvert\) at \(\rho=0.94\) (\(R^2=0.89\)). In 11 of these 12 fields \(X^{*}_{\mathrm{can}}\) equals \(B_K\) exactly: the canonical orbit never exceeds its terminal band. Deformed orbits can exceed it transiently, but the excess tracks the \emph{seed} height and decays along the run --- in the large-height stress runs (\S4.2), the running maximum after the start never exceeded \(\max(H_0,B_K)\) (86/86; for the 84 starts above the band, never exceeded \(H_0\), and the two below-band starts rose exactly to \(B_K\), as reaching the cycle requires); the phenomenon ``excursion'' is a property of perturbed starts, not of the field.

Figure~\ref{fig:band} shows this fit.

\begin{figure}[t]
\centering
\includegraphics[width=0.62\linewidth]{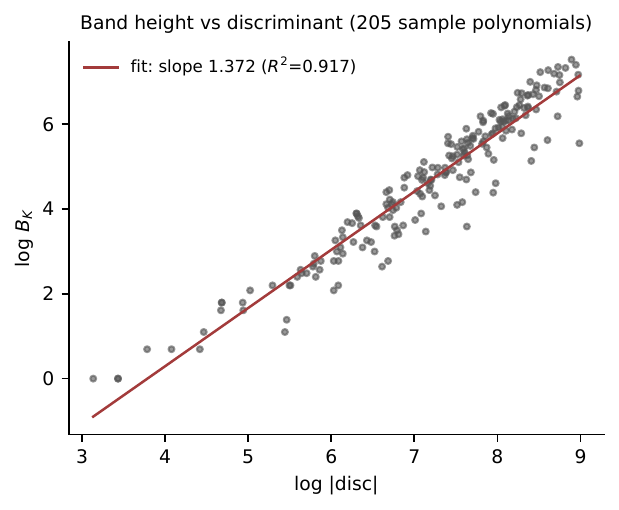}
\caption{Band height \(B_K\) against the polynomial discriminant for the 205 sample polynomials (log-log): slope 1.372, \(R^2=0.917\), recomputed from the archived tables by the figure script.}
\label{fig:band}
\end{figure}

\subsection{Four descent diagnostics}\label{subsec:diagnostics}

On the 90 height-diagnostic runs (30 canonical + 60 moderate deformations; 1016 steps, distinct from the large-height stress set of \S4.2), with candidate heights computed at every step --- exhaustively at 1007 of the 1016 steps (up to the \(5\cdot 10^4\) height-diagnostic exhaustiveness cap), and at the 9 larger steps (up to \(1.39\cdot 10^6\) candidates) by a fixed seeded sample of 1000 top-score plus 1000 random candidates:

\begin{enumerate}
\def\labelenumi{\arabic{enumi}.}
\tightlist
\item
  \textbf{Existence below the envelope (empirical Minkowski diagnostic, \(C_1=1\)).} At every one of the 1016 steps there exists an admissible candidate with \(H\le\max(H(s),B_K)\). The measured constant is exactly 1.
\item
  \textbf{Chosen-step envelope (\(C_2=11\), attained).} The candidate actually selected by the score satisfies \(H(\mathrm{selected})\le 11\cdot\max(H(s),B_K)\) at all 1016 steps --- with equality (11.0 exactly) attained at one step, so the constant has no slack on this sample --- and \(\le 1\cdot\max(H(s),B_K)\) on 99.6\% of them (median ratio 0.12).
\item
  \textbf{Pure descent above the band.} In the separate 90-run large-height stress set of \S4.2, the 84 runs started above the band never rose above their starting value (constant exactly 1); the two below-band starts rose exactly to \(B_K\). Return below \(H_0\) took at most 2 steps in 87 of those 90 stress runs (partial trajectories of the 4 guard-stopped runs included); per-run contraction: descriptive Spearman \(\rho=0.669\) on the deterministic sample.
\item
  \textbf{No re-ascent after reaching the band (L2 check).} On 794 steps of the earlier deformation campaigns: once a trajectory's height enters \([1,B_K]\), it never again exceeds \(11\cdot B_K\). Exceptions: 0/794.
\end{enumerate}

\subsection{The score--height frontier}\label{subsec:frontier}

The link a proof must control is between the \emph{score} (what the algorithm minimizes) and the \emph{height} (what a reduction theory needs to control). We measured it directly, twice. First, on 70 sampled steps across 27 fields, every candidate in the near-minimal score window \(\operatorname{score}\le \operatorname{score}_{\min}+\delta\cdot\lvert\operatorname{score}_{\min}\rvert\) (three widths, fixed in advance, \(\delta\in\{10^{-6},10^{-3},10^{-2}\}\)) satisfies \(H\le 11\cdot\max(H(s),B_K)\). Exceptions: 0/70 windows. Second, a dedicated anatomy campaign (30 fields, 808 instrumented steps, 973,914 candidates, exhaustive at all but 8 steps) recorded, for \emph{every} candidate, the pair \((r,\delta_{\mathrm{rel}})\) with \(r=\log(H_{\mathrm{cand}}/\max(H(s),B_K))\) and \(\delta_{\mathrm{rel}}\) the relative score gap to the minimum. Findings: (i) zero candidates with \(r>\log 11\) and \(\delta_{\mathrm{rel}}<10^{-3}\) --- the frontier holds on a candidate sample 8.6 times larger (973,914 against 113,114 candidates); (ii) the \emph{floor} of \(\delta_{\mathrm{rel}}\) per \(r\)-bin is increasing in \(r\) (Spearman \(\rho=0.996\) across 35 bins; rank-tie handling as in the archived script) and saturates at 1 in the top \(r\)-bins (\(r>10\)). The saturation has a structural reading: scores are non-positive, so \(\delta_{\mathrm{rel}}=1\) means \(\operatorname{score}\approx 0\) --- in the measured campaigns, large candidate heights come with scores close to 0, hence far from the strictly negative minimum. A possible analytic route would require at least: (L1b-i) a uniform bound --- every candidate of height \(H\ge C\cdot\mathrm{env}\) has \(\lvert\operatorname{score}\rvert\le f(H)\) with \(f\to 0\), for all admissible states, not only sampled ones; (L1b-ii) a uniform existence statement --- at every admissible state some candidate has \(\operatorname{score}\le -c_K<0\) (the score-side counterpart of the \(C_1=1\) diagnostic above); and, beyond both, a trapping argument confining canonicalized trajectories to a finite state set, with exact tie handling. Only the conjunction would yield periodicity per field; none of the three is proved here. The per-field fitted floor slope is field-dependent (CV = 0.74; no significant normalization by Reg or \(\log B_K\) was found), so the constant in (ii) is a field invariant whose form remains open. Two facts sharpen the target. By the closed form of \S2.2, \(\lvert\operatorname{score}\rvert=\lvert D\rvert^2\cdot\lvert\xi\rvert^2/\lvert\langle\xi\times\nu,\xi\times\nu\rangle\rvert^2\) with \(\lvert D\rvert\) constant along the run, so (i) and (ii) are statements about the ratio \(\lvert\xi\rvert^2/\lvert\langle\xi\times\nu,\xi\times\nu\rangle\rvert^2\): (i) asks that high-height candidates have small ratio (Pl\"ucker norm large relative to \(\lvert\xi\rvert\)), (ii) that some candidate keeps the ratio above a field constant. And the same 973,914-candidate dataset gives the empirical form of \(f\) directly: per-field regression of \(\log\lvert\operatorname{score}\rvert\) on \(\log H\) has slope \(\beta\) with median \(-1.17\) and cross-field CV 0.12. A per-step refinement (120 steps, per-candidate recomputation of both components of the closed form; campaign shipped in the archive) localizes the law completely: at fixed state, \(\log\lvert\xi\rvert^2\) is flat in \(\log H\) (slope \(\approx 0.00\)) while \(\log\lvert\langle\xi\times\nu,\xi\times\nu\rangle\rvert^2\) has slope \(\approx 0.98\), denominator-dominant at all 120 steps --- so the local law is \(\lvert\operatorname{score}\rvert\sim H^{-1}\), the pooled \(-1.17\) being a composition effect across states. The analytic content of (i) thus reduces to a single inequality between unit-invariant quantities: for admissible candidates of large height, \(\lvert\langle\xi\times\nu,\xi\times\nu\rangle\rvert^{2}\ge c_K\,\lvert\xi\rvert^{2}H\) (equivalently \(\lvert\operatorname{score}\rvert\le\lvert D\rvert^{2}/(c_K H)\), the local law above; the ratio is invariant under the unit action, while neither factor separately is).

Figure~\ref{fig:frontier} displays the sampled frontier.

\begin{figure}[t]
\centering
\includegraphics[width=0.88\linewidth]{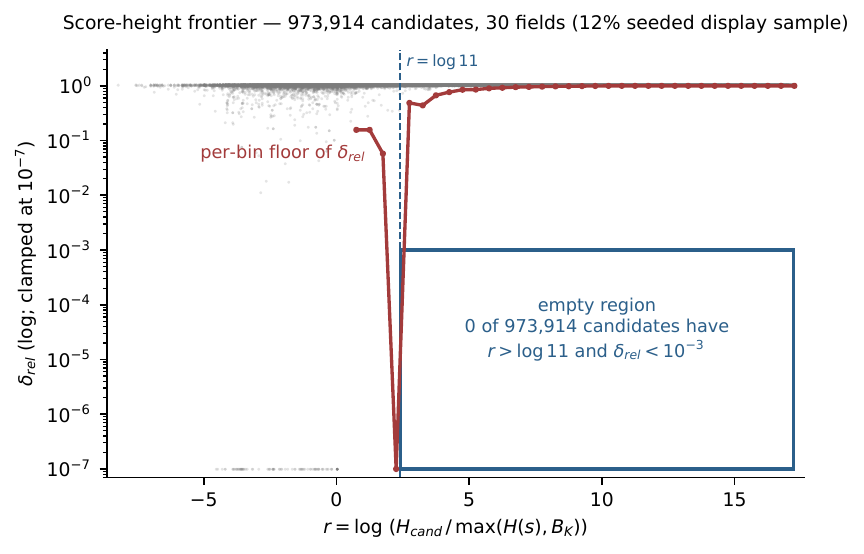}
\caption{The score-height frontier over all 973,914 instrumented candidates (12\% seeded display sample, rasterized; log scale; \(\delta_{\mathrm{rel}}\) clamped at \(10^{-7}\) --- a handful of float-noise negatives documented in the figure script). Red: per-bin floor of \(\delta_{\mathrm{rel}}\) for \(r\ge 0.5\). Framed: the empty region of \S5.3 --- no candidate has \(r>\log 11\) and \(\delta_{\mathrm{rel}}<10^{-3}\). The dense mass at \(\delta_{\mathrm{rel}}\approx 1\) for large \(r\) is the saturation: high candidates have scores near 0.}
\label{fig:frontier}
\end{figure}

\subsection{An honest negative}\label{subsec:negative}

The naive bridge is false: the score-minimal candidate is \emph{not} the height-minimal one. Over the 1016 instrumented steps the selected candidate exceeds the height-minimal candidate by a median factor 2.9, upper quartile 10, maximum \(\times 377\); and the height-minimal candidate has no simple identity (it is the greedy digit on 8\% of steps, \(W\) on 5\%, ``other'' on 87\%). A proof therefore cannot proceed by showing the algorithm picks small heights \emph{pointwise}; what the data supports is the weaker and sufficient frontier statement of \S5.3. This distinction --- measured, not guessed --- is in our view the main structural information this dataset contributes toward a proof.

\section{Machine-verified instances}\label{sec:instances}

\subsection{Method: forward closure and functional graph}\label{subsec:closure-method}

Fix \(K\) and canonicalize states modulo the diagonal action of \(\pm\varepsilon^t\).

\begin{definition}[canonical form]\label{def:canonical-form}
The canonical representative of \(s\) is \(\pm\varepsilon^t\cdot s\) where \(t\in\mathbb{Z}\) is the unique exponent bringing \(\lvert\sigma_r(u_1)\rvert\) into the fundamental window \([1,\lvert\sigma_r(\varepsilon)\rvert^{-1})\) (\(\varepsilon\) chosen contracting in \(\sigma_r\)), and the global sign is fixed by the archived canonicalization key of each table (the shipped conventions differ per table; the functional graph and every statement of this section are therefore statements about states modulo \(\langle\pm\varepsilon^t\rangle\), on which all conventions agree). Since \(\varepsilon\in\mathcal{O}_K^\times\) need not lie in \(\mathbb{Z}[\alpha]\), the canonical representative has exact \emph{rational} coordinates in the basis \((1,\alpha,\alpha^2)\), with denominators dividing the index \([\mathcal{O}_K:\mathbb{Z}[\alpha]]\) (integer coordinates when \(\mathbb{Z}[\alpha]\) is maximal; e.g.~denominators up to 2 occur for \(x^3-4\)). Equality of states is exact equality of these rational coordinates; the dynamics itself runs on integer states throughout. (The exact procedure, including the fixture ``\(s\) and \(\varepsilon\cdot s\) canonicalize identically'', is in the archive.)
\end{definition}

Starting from an explicit seed set, compute the forward closure under one step of the algorithm: apply \(A\), canonicalize the image, add it if new, iterate to stability. On the resulting finite set \(F\), the transition map \(s\mapsto A(s)\) is a functional graph; its cycles and basins are computed exactly. This yields statements of the form:

\begin{theorem}[schema]\label{thm:schema}
The explicit finite set \(F\) is closed under \(A\), its functional graph has a unique cycle --- the canonical cycle of \(K\) --- and every state of \(F\) reaches it.
\end{theorem}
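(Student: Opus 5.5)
The plan is to reduce the schema to a finite, exactly decidable computation on the functional graph, using the earlier results so that every transition is certified in \(\mathbb{Q}(\alpha)\). The argument runs in three steps.

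\emph{The transition map descends to canonical forms.} For a state \(s\) and \(\lambda=\pm\varepsilon^t\), Lemma~\ref{lem:equivariance} shows that \(A(\lambda s)=\lambda A(s)\) when \(\sigma_r(\lambda)>0\). For the sign \(-1\), the admissibility requirement (positive real embeddings) forces a consistent sign normalization. I would therefore fix the sign convention so that the canonical representative has \(\sigma_r(u_i)>0\), which absorbs \(\pm\) into the choice of \(t\) and the sign. Since the score, sorting, ranges, \(W\)-condition and isotropy are invariant under a positive real scalar, \(A\) induces a well-defined map \(\bar A\) on classes modulo \(\langle\pm\varepsilon\rangle\). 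The canonical representative has rational coordinates with bounded denominators (Definition~\ref{def:canonical-form}), so equality of classes is exact equality of rational vectors.

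\emph{Closure and exactness of each edge.} Starting from the seed set, I iterate \(\bar A\) and canonicalize each image. For each state \(s\in F\), I recompute the full candidate set exactly:
\begin{itemize}
\item the floor bounds \(\lfloor x/z\rfloor\), \(\lfloor y/z\rfloor\), \(\lfloor(x/z-a)/(y/z-b)\rfloor\) via sign decisions in \(\mathbb{Q}(\alpha)\);
\item the admissibility of \(W\);
\item isotropy;
\item the scores, which lie in \(\mathbb{Q}(\alpha)\) by Proposition~\ref{prop:exact-scores} and are strictly negative by Lemma~\ref{lem:closed-score}.
\end{itemize}
I then select the minimum and resolve exact ties by the declared ordering. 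Nonemptiness of the candidate set comes from Lemma~\ref{lem:strict-order}, which supplies \(V(0,0,1)\), together with the absence of isotropic candidates on the data. Every comparison is a sign decision for an element of \(\mathbb{Q}(\alpha)\). Such a decision is settled either by interval arithmetic that separates the value from zero, or, when the value is exactly zero, by symbolic vanishing. This makes \(A(s)\) a certified value, and closure of \(F\) amounts to checking that each canonicalized image is already a member of \(F\).

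\emph{Graph structure.} A functional graph on a finite set has, in each weakly connected component, exactly one cycle, and every vertex reaches the cycle of its component. Uniqueness of the cycle therefore reduces to checking that the graph is connected, or equivalently that following \(\bar A\) from every vertex lands on the same cycle. This is a finite traversal. Finally, I identify that cycle with the canonical cycle of \(K\) by checking that the canonical orbit of the seed \(s_0\) lies in \(F\). The certificate \(\lambda\) of \S2.4 then ties this cycle to the recorded periodic expansion.

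The main obstacle is the first step. The descent to \(\langle\pm\varepsilon^t\rangle\)-classes must be compatible with admissibility: multiplying by \(-1\) makes every real embedding negative, so the induced map has to be defined on sign-normalized representatives. I also need the canonical representatives, which may leave \(\mathbb{Z}[\alpha]\), to be run on equivalently scaled integer states. Once this is settled, the rest is finite exact bookkeeping. That bookkeeping is what the Lean kernel check certifies for the plastic field.
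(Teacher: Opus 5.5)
Your proposal is correct and takes essentially the paper's approach. You compute the forward closure under \(A\) on classes modulo \(\langle\pm\varepsilon\rangle\), well defined on sign-normalized representatives by Lemma~\ref{lem:equivariance}; you certify each transition exactly in \(\mathbb{Q}(\alpha)\), declaring ties only on exact vanishing; and a finite functional-graph traversal, with \(s_0\in F\), identifies the unique cycle as the canonical one. One attribution to correct: the Lean 4 check covers only the graph layer (closure, unique cycle, convergence), while the exact score comparisons are certified by separate scripts, and exactly so only for the plastic instance --- the other three tables are 160-digit computations, so there the schema is exact only relative to the fixed table.
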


When the transition table itself is certified exactly (as for the plastic instance), such a statement is decided entirely by finite exact computation, with the epistemic status of a computer-verified case check (as in graph-theoretic case analyses), and it settles that \emph{for these explicitly listed states there is nothing left to prove}; when the table is a 160-digit computation (\S6.3), the graph analysis is exact only relative to the fixed table. The general conjecture T*/C* is not touched either way.

\subsection{The plastic instance, exact end to end}\label{subsec:plastic}

\begin{customthm}[plastic instance]{B}\label{thm:B}
For \(K=\mathbb{Q}(\alpha)\), \(\alpha^3=\alpha+1\) (the plastic field, disc \(-23\), \(B_K=1\)). For each nonzero integer \(n\) with \(|n|\le 11\), fix the finite representative list \(R_n\subset K^\times/\langle\pm\varepsilon\rangle\) returned by the archived bnfisintnorm enumeration, ordered by the stored coordinate key; every enumerated element \(u\) of norm \(n\) is written uniquely as \(u=\pm\rho\varepsilon^{e(u)}\) with \(\rho\in R_n\), using the same sign convention as Definition~\ref{def:canonical-form}. Define the plastic enumeration window
\[
\mathcal W_{\mathrm{pl}}=\{s=(u_1,u_2,u_3): (e(u_2)-e(u_1),\,e(u_3)-e(u_1))\in[-7,12]\times[-7,9]\},
\]
after global canonicalization. (The bounds were motivated by the envelope observed across prior campaigns, widened by 3; the theorem concerns this explicitly defined finite enumeration.) The admissible states of height \(H\le 11\) within \(\mathcal W_{\mathrm{pl}}\), with the seed's coordinate determinant \(d_0\), number 2263, computed from the norm equations (bnfisintnorm) for \(\lvert n\rvert\le 11\). Their forward closure stabilizes in 3 iterations, adding 22 states (growth 21, 1, 0): \textbf{the explicit set \(F\) of 2285 canonical states is closed under \(A\), and its functional graph has exactly one cycle --- the canonical one (length 1, digit \(V(1,0,0)\)); all 2285 states flow to it.} This is a certified transition graph on an explicit finite set, containing all in-window admissible states of height \(\le 11\) --- not a statement about all admissible states of that height (\S6.4). The maximum height on \(F\) is 187, reached along transients. Moreover (\S2.3), all 2285 transitions are certified by exact score comparisons in \(\mathbb{Q}(\alpha)\): the instance depends on no floating-point threshold. The residual caveat: identifying ``\(F\supseteq\) all admissible states of height \(\le 11\)'' requires a window lemma (that admissibility confines exponent spreads to \(\mathcal W_{\mathrm{pl}}\)), which is \emph{false} in general --- see \S6.4 --- so the theorem is about the explicit \(F\), plus the separately verified inclusion of the 2263 in-window states.
\end{customthm}

Figure~\ref{fig:depths} displays distances in this graph.

\begin{figure}[t]
\centering
\includegraphics[width=0.62\linewidth]{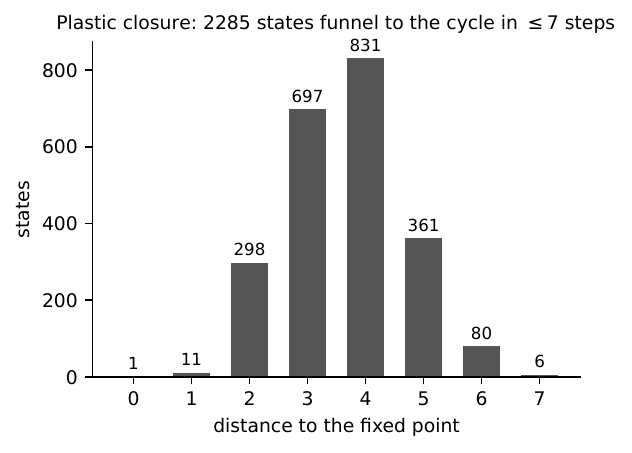}
\caption{Distance to the fixed point in the certified plastic transition graph: the 2285 states funnel to the canonical cycle in \(\le 7\) steps (the certified graph bound).}
\label{fig:depths}
\end{figure}

\subsection{Three further complete instances; twelve orbit-level}\label{subsec:further-instances}

\begin{customprop}[three further complete closures]{C}\label{prop:C}
The same computation for \(x^3-4\) (Karpenkov's published example) closes with \(\lvert F\rvert=69\) and a unique cycle, the canonical one (period 4, the same period length as his published HAPD expansion); its transition table (computed at 160 digits, \S6.3) shows no score ties at that working precision (maximum minimal-score tie class size \(1\)). For \(x^3+x-1\) (disc \(-31\)) --- the field where score ties first appeared --- the closure of the canonical seed together with all deformed states from the Section 4 campaigns has \(\lvert F\rvert=40\), is closed in one iteration, and has the canonical fixed point (digit \(V(0,0,1)\)) as unique cycle, basin 40/40, maximal transient length 7; the 10 tie-transitions inside \(F\) are certified \emph{exact algebraic equalities} of the minimum (by the \(\mathbb{Q}(\alpha)\) method of \S2.3 adapted to this field --- \(J^2\) derived and verified for \(x^3+x-1\); certification table ties\_exact\_x3pxm1.csv in the archive), and the tie-breaking convention reproduces the recorded digits 10/10. For \(x^3-2\) --- the field of \(\sqrt[3]{2}\), one of the Adam--Rhin fields --- the closure has \(\lvert F\rvert=60\), closed in one iteration, unique canonical cycle (period 3), basin 60/60, no score ties observed at 160-digit working precision, and maximal height 8,475,018,297 along a transient from a large deformation seed (pure descent, \S5.2). Thus every complete instance documents its tie status, and every tie-bearing complete instance has its ties decided exactly. Twelve fields in total (the eight smallest of the determinization sample and four quantile-selected by band height --- the three fields above among them) carry the orbit-level version: the forward closure of the canonical orbit alone has a unique cycle in all twelve. Decision precision per complete instance, stated exactly: the plastic transition map is decided exactly end to end (\S2.3); for \(x^3+x-1\) the 10 minimum ties are certified exact and the remaining selections are at 160 digits; the \(x^3-4\) and \(x^3-2\) closure tables are computed at 160 digits (no ties observed). The graph analysis downstream of each table is exact once the table is fixed.
\end{customprop}

\subsection{The admissible set is wilder than the reachable set}\label{subsec:admissible}

It is tempting to read \S6.2 as ``all admissible states of height \(\le 11\) converge''. That statement is \emph{not available}, for a structural reason we document rather than hide. Widening the exponent window by \(\pm 10\) and re-enumerating admissible states of height \(\le 11\) for the plastic field produces 1141 states not in \(F\) (one further enumerated state turned out to be a duplicate of an \(F\)-state and is excluded from all counts). The window did not saturate in the checked range: admissible states exist at the \(+10\) widening tested, so ``admissible with \(H\le 11\)'' is not confined to the certified window (and shows no sign of saturating there). The reachable set is the confined object: none of the 1141 out-window states is ever \emph{visited} by the dynamics started in \(F\). Of these 1141, all 948 within computational reach --- first-step candidate count at most \(10^5\), the out-window flow guard --- converge to the canonical cycle (zero alternative cycles, zero unresolved orbits; maximal height along these flows 463); the remaining 193 are out of reach for an explicit, quantified reason --- their first-step candidate sets are enormous (median enumeration count 6,640 across the 1141, tail up to \(1.67\cdot 10^8\) --- the count itself is computed exactly without enumeration; ``hard'' is defined by that same count exceeding the out-window flow guard, so the count separates the two groups by construction --- the informative fact is the size and shape of its distribution). Unit-exponent spread is the best single structural discriminant of reachable versus out-window admissible states (AUC 0.95), though no exact invariant characterizing reachability was found; we state its identification as an open problem in \S8. Cumulatively, the computed basin of the canonical cycle of the plastic field contains 3,233 distinct canonical states --- the 2285 closure states (certified exactly, \S2.3) and the 948 tested out-window states (computed at 160-digit working precision) --- the two populations disjoint by an exhaustive key intersection; every state flowed in the earlier, partially overlapping campaigns converges as well, and every flow table of the program, for every field, records zero non-canonical terminal cycles.

Figure~\ref{fig:hardness} displays the first-step enumeration costs.

\begin{figure}[t]
\centering
\includegraphics[width=0.66\linewidth]{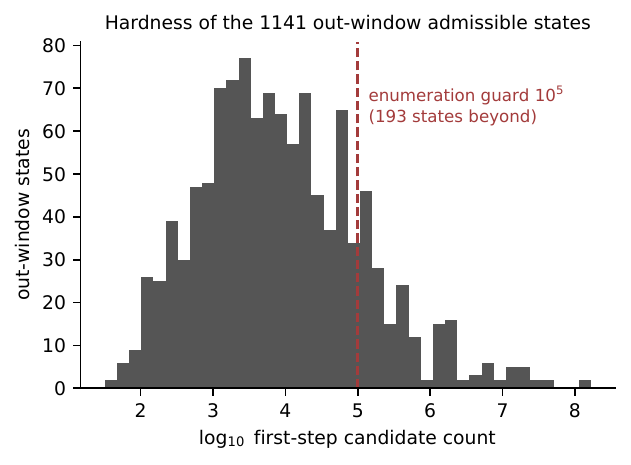}
\caption{First-step enumeration cost of the 1141 out-window admissible states (\(\log_{10}\) candidate count): median 6,640, tail to \(1.67\cdot 10^8\); the out-window flow guard \(10^5\) separates the 948 flowed states from the 193 computationally out of reach.}
\label{fig:hardness}
\end{figure}

\section{Comparison with literal HAPD}\label{sec:hapd}

\subsection{A faithful implementation}\label{subsec:hapd-implementation}

The heuristic algebraic periodicity detecting algorithm (HAPD) is the other algorithm Karpenkov associates with the complex case. We implemented it strictly from its published definition \cite[\S2.3]{Ka22} (state \((\xi,\nu,\mu)\); digit \((a,b)\) minimizing the Markov--Davenport characteristic among admissible \((a,b,1)\); transition matrices as printed), with the published Example~2.11 of \cite{Ka22} as fidelity fixture: our implementation reproduces its expansion of \(x^3-4\) exactly (pre-period 6, period 4).

\subsection{Literal HAPD, as implemented, completes on 50 of 205 polynomials}\label{subsec:hapd-completes}

Run as literally defined (no added conventions, step cap 800) on the 205-polynomial sample, HAPD reaches a certified periodic return on 50/205 polynomials; in the remaining 155 it halts, for two structural reasons: on 80 polynomials the candidate set of the definition becomes \emph{empty} at some step; on 75 the run stops on a tie of the \emph{implemented} integer characteristic --- the compared quantities are exact integers, so these ties are exact for the characteristic as implemented; the published definition specifies no tie-break, so we halt rather than invent a rule, and we make no claim that the implemented characteristic's ties coincide with tie behaviour of the published HAPD definition. The second family is consistent with the exact-tie phenomenon certified for the deterministic \(\sin^2\) score (\S2.3); whether the HAPD ties are the same algebraic degeneracies is not established here, and any minimization-type algorithm without a tie convention is underdetermined wherever exact ties occur. The deterministic \(\sin^2\)-algorithm of this paper, by contrast, completes with certificates on 205/205. We read these halts not as a defect of HAPD --- which was proposed as a heuristic --- but as evidence that determinization is not cosmetic: it is what makes a mass empirical record possible at all. (A determinized HAPD variant might well complete on more inputs; our comparison concerns only the literal published definition.)

\subsection{An observed divisibility pattern of certificate units}\label{subsec:hapd-units}

On the 50 polynomials where both algorithms terminate, the certificate units are never unrelated: writing each certificate as \(\zeta\cdot\varepsilon^k\), we have \(k_{\mathrm{HAPD}}=m\cdot k_{\sin^2}\) with \(m=1\) on 36/50 and \(m\in\{2,3,4\}\) on the remaining 14, \emph{and} \(\zeta_{\mathrm{HAPD}}=\zeta_{\sin^2}^{m}\) on all 50 (torsion checked explicitly) --- hence \(\lambda_{\mathrm{HAPD}}=\lambda_{\sin^2}^{m}\) exactly, so \(\lambda_{\mathrm{HAPD}}\in\langle\lambda_{\sin^2}\rangle\) in all 50 cases, and conversely fails in the 14 cases \(m>1\). The deterministic \(\sin^2\)-algorithm returns at the \emph{minimal} power among the two. Period lengths correlate (Spearman \(\rho=0.634\); descriptive, as all correlations in this paper, \S5) but coincide on only 6/50: the algorithms are genuinely different expansions detecting the same underlying unit group, consistently with \S3.6.

\section{Limitations and open questions}\label{sec:limitations}

\subsection{Status summary}\label{subsec:status}

Table~\ref{tab:status} recapitulates the certification status of every computational block.

\begin{table}[ht]
\centering\footnotesize
\begin{tabular}{@{}p{2.4cm}p{2.2cm}p{3.6cm}p{3.0cm}@{}}
\toprule
Block & Size & Exactness & What it shows\\
\midrule
Mass campaign (\S3.2) & 205 runs, 2265 steps & every transition and tie exact-replayed; unit certificates exact & periodicity of all sign-normalized runs, with units\\
\addlinespace
Box campaign (\S3.3) & 194 runs (97 representatives; 82 distinct digit sequences), 420 distinct steps & same standard as \S3.2 & the same, on an exhaustive mirror-closed box\\
\addlinespace
Deformations (\S4) & 457 starts & runs at 160 digits, re-executed deterministically; cycle identity by exact states; certificates exact & canonical-cycle invariance under change of basis (C*)\\
\addlinespace
Complete closures (\S6) & 4 fields; plastic: 2285 states & plastic transition map fully exact; ties of \(x^3+x-1\) exact; certificates exact; plastic graph machine-checked in Lean 4 (kernel-only) & convergence over explicit finite state sets\\
\addlinespace
Out-window flows (\S6.4) & 948 of 1141 states & 160 digits & reachable \(\subsetneq\) admissible; no alternative cycle\\
\addlinespace
Frontier (\S5.3) & 973{,}914 candidates & instrumented measurement (160 digits) & score--height profile; empty region\\
\addlinespace
HAPD comparison (\S7) & 205 runs & diagnostic & literal HAPD incomplete on (1,1)\\
\bottomrule
\end{tabular}
\caption{Certification status by block.}\label{tab:status}
\end{table}

Established by finite exact computation: the certificates of \S3 (205/205 direct-or-mirror, and 194/194 on the exhaustive box), and, of the \S6 instances, the plastic closure (exact transition map, exactly decided ties, machine-checked graph). Established as 160-digit computations whose graph analysis is exact on the fixed tables: the \(x^3+x-1\) closure (its 10 minimum ties certified exact), and the \(x^3-4\) and \(x^3-2\) closures (\S6.3); the twelve orbit-level closures are of the same numerical status. Measured on samples fixed in advance, with constants but without proof: the descent diagnostics and the score--height frontier of \S5, the canonical-cycle record of \S4 (the cycle-identity comparisons themselves are exact, \S4.1; what lacks proof is the generality). Open: everything general. No statement of this paper resolves Problem 4; the general periodicity theorem (T*) and the canonical-cycle conjecture (C*) remain conjectures.

\subsection{Open problems}\label{subsec:open-problems}

\textbf{OP1 (the analytic lock).} Prove, for complex cubic fields: (i) a candidate state of height \(H\ge C\cdot\max(H(s),B_K)\) has \(\lvert\operatorname{score}\rvert\le f(H)\) with \(f\to 0\); (ii) at every admissible state some candidate has \(\operatorname{score}\le -c_K<0\). Together with proved analogues of the descent diagnostics of \S5.2 and a trapping statement confining canonicalized trajectories to a finite window (OP3), these would give periodicity per field by the finite method of \S6. The data of \S5.3 quantifies both statements; the field-dependence of the constants (floor slope CV = 0.74 across 30 fields; no normalization by Reg or \(\log\lvert\mathrm{disc}\rvert\) found) is part of the problem.

\textbf{OP2 (canonical cycle).} Prove or refute C* (\S4.4). Even for a single field, a proof that \emph{all} admissible states --- not just an explicit finite set --- reach the canonical cycle requires OP1 plus an answer to OP3.

\textbf{OP3 (the reachable set).} Characterize the states actually visited by the dynamics. The admissible set of bounded height is not confined to the certified window (\S6.4; checked at a $+10$ widening), but the reachable set empirically is; unit-exponent spread is the best structural discriminant we found (AUC 0.95), yet no exact invariant. A window lemma (``trajectories started in a finite window \(\mathcal U\) stay in a finite window \(\mathcal U'\)'') would convert the \S6 instance theorems into per-field periodicity theorems.

\textbf{OP4 (the band).} Explain \(B_K\approx\lvert\mathrm{disc}\rvert^{1.37}\) (\(R^2=0.92\), \S5.1) --- presumably an avatar of the covolume/regulator geometry of the unit lattice --- and the rebound constant \(11=C_2\) of \S5.2.

\textbf{OP5 (exact ties).} Characterize the algebraic score ties of \S2.3 (156/2285 states of the plastic closure; all 10 ties of the \(x^3+x-1\) closure, certified exact; and possibly related to the 75 tie-halts of \S7.2, exact for the implemented integer characteristic). Which pairs of unimodular moves are score-equal on which states, and why? A classification would either justify the lexicographic convention structurally or suggest a canonical one.

\textbf{OP6 (the certificate exponent).} Explain the distribution of \(k\) (\S3.4) --- in particular what forces \(k>1\) (observed up to 14) --- beyond the closing-identity constraint. The mechanism is lattice-level, not field-level: the same field carries \(k=1\) and \(k=14\) at different generators (\S4.4).

\textbf{OP7 (formal verification).} The plastic transition map is certified by exact \(\mathbb{Q}(\alpha)\) comparisons; the three other \S6 closure tables are 160-digit computations whose graph analysis is exact once the table is fixed (\S6.3). The combinatorial core of this layer is no longer future work: the full plastic transition graph (2285 states, convergence to the terminal fixed point in at most 7 steps) is machine-checked in Lean 4, kernel-only, with standard axioms and no native decision procedures. Completing a formal instance theorem on top of this sealed graph layer --- the exact score-comparison certificates and the unit identity --- remains a finite, well-scoped project: the integer certificate layer (unimodular matrices, functional graph, unit identity) requires no real analysis at all.

\textbf{OP8 (functoriality of the cycle).} The terminal cycle is not a field invariant and not determined by the polynomial discriminant alone (\S4.4). A ring-isomorphism test across all 33 same-field pairs of the sample sharpens the question: the 3 pairs with isomorphic orders \(\mathbb{Z}[\alpha]\) have equal terminal data, all pairs with equal terminal data have isomorphic orders, and the 5 same-discriminant disagreeing pairs turn out to be non-isomorphic orders. The data is thus consistent, in both directions, with the terminal cycle being an invariant of the isomorphism class of the order \(\mathbb{Z}[\alpha]\) --- three agreeing pairs is thin evidence, but no counterexample exists in the sample. OP8, sharpened: prove or refute that the terminal cycle depends only on the isomorphism class of \((\mathbb{Z}[\alpha],\mathrm{conventions})\).

\subsection{Threats to validity}\label{subsec:threats}

The samples are bounded (the 205-polynomial sample drawn from \([-6,6]^3\), the exhaustive box campaign confined to \([-3,3]^3\)); all constants (11, the floor slope, the exponent 1.37) are sample-calibrated and could drift on wider samples. Step caps (500 for mass and box runs, 800 for literal HAPD, 1000 for ordinary deformation runs, larger in the separate stress scripts) and campaign-specific enumeration guards were never hit in the certified runs, but 4 stress deformations and 193 out-window states remain unresolved for quantified computational reasons --- they are reported, not extrapolated. Deformed starts are seeded products of elementary matrices, a specific ensemble of \(\mathrm{SL}_3(\mathbb{Z})\). After the exact replays, the floating tie threshold \(10^{-60}\) remains relevant only for the auxiliary campaigns that were not replayed exactly (deformations, stress, frontier instrumentation, HAPD comparison); the 205 mass trajectories, the box runs, and the plastic transition map carry no floating-point risk (the tie status of the other closure instances is documented in \S6.3).

\section{Reproducibility}\label{sec:reproducibility}

This research was AI-assisted. All computations were carried out under a fixed audit protocol: questions and thresholds fixed in advance, published fixtures reproduced before every run, independent recounts of every headline number, hash-chained manifests, and adversarial review of the manuscript itself. The protocol is documented in PROTOCOL.md inside the archive; the human author directed the work and takes responsibility for all claims.

The reproducibility archive (the version accompanying this paper is stamped in its VERSION file) distinguishes four verification levels, stated here to avoid ambiguity: (i) \emph{hash checking} --- every shipped table is SHA-256-verified (verify\_all.py, portable, no PARI needed); (ii) \emph{recounting} --- verify\_all.py recomputes the main table-derived headline counts of \S2--\S7 from the shipped tables (certificate distributions, the direct closing identity \(P\cdot\bar g=k\cdot\operatorname{Reg}\), closure sizes and cycles, HAPD divisibility with torsion, out-window counts, band refits); (iii) \emph{regeneration} --- make\_all.py \texttt{-{}-mass} reruns the full \S3 computation (PARI/cypari2, pinned environment: Python 3.11, PARI/GP 2.17.3) from the 205 polynomials as sole input, with output byte-identical to the shipped tables; the heavier \S4--\S6 campaigns ship as reference outputs with their scripts (\texttt{-{}-instances} re-runs them; not required by the default verifier); (iv) \emph{exact certification} --- the plastic transition map, the tie tables, all 2265 transition steps of the 205 mass trajectories and all 840 recorded steps of the 194 box runs, and the state-level cycle-identity comparisons of the 457 deformed runs are decided in exact \(\mathbb{Q}(\alpha)\) arithmetic by dedicated shipped scripts. Quick fixtures run in seconds from a clean extraction.

\section*{Version history}
\emph{v1}: deposited on Zenodo, July 2026 (manuscript \href{https://doi.org/10.5281/zenodo.21222498}{doi:10.5281/zenodo.21222498}). \emph{v2} (this version): editorial revision following external review --- expanded related work (the Vorono\"i--Buchmann unit-algorithm lineage, the HAPD record on complex cubic data, the repetend-matrix certification precedent); the strict-real-ordering lemma (Lemma~\ref{lem:strict-order}) stated and proved explicitly; the HAPD tie-halts restated as exact ties of the implemented integer characteristic, with no fidelity claim to the published characteristic; correlation indicators labeled as exploratory diagnostics; PDF metadata completed. Two additions report work completed after the v1 deposit: the abstract and OP7 now state that the plastic transition graph, with its convergence to the terminal cycle, is machine-checked in Lean 4 (kernel-only, standard axioms) --- in v1 this was described as a well-scoped project --- and the abstract's box-campaign sentence now names the 97 positive-root representative expansions (82 distinct digit sequences). Also added after v1: Conjecture T* now includes indefinite well-definedness, with a remark recording that this clause is settled by Lemma~\ref{lem:strict-order} and the companion theory paper's non-isotropy theorem; Theorem~B's enumeration is restated via the representative lists \(R_n\), the exponent map \(e(u)\) and the window \(\mathcal W_{\mathrm{pl}}\); the tie status of the \(x^3-4\) closure is recorded in Proposition~C; and the theory companion is now cited. Statement corrections: the closure-instance exactness statuses are now uniform across the abstract, \S2.2, \S2.5, Proposition~C and \S8.1 (mass, box and the plastic closure exact; the three other closure tables are 160-digit computations with exact graph analysis, \S6.3); a rotation-diagnostic $p$-value corrected to its recomputed value ($0.38$); the HAPD subsection title now says ``as implemented''; and the pre-period indexing convention of \S2.4 is restated to match the recorded values exactly (zero-based state indexing from the seed \(s_0\), first-recurrence index clamped below at 1); the v1 one-based phrasing did not match the recorded values in runs with a transient prefix. The recorded values themselves are unchanged, and every certificate identity holds at the recorded index. Beyond the statement corrections listed above, no principal result or computed datum of v1 is modified.

\section*{Acknowledgments}
The author thanks Karsten M\"uller for helpful correspondence.

\section*{Data availability}
A reproducibility archive containing all scripts, data and verification tools accompanies this paper: \href{https://doi.org/10.5281/zenodo.21182759}{doi:10.5281/zenodo.21182759} (v9, including the self-contained Lean formalization layer with replay instructions; archive SHA-256: \texttt{72808bae\allowbreak{}53ff2eb9\allowbreak{}36ce3360\allowbreak{}4d466c9b\allowbreak{}ef2968f2\allowbreak{}4b4dc7c2\allowbreak{}13279ad3\allowbreak{}90ab7cb2}).

\appendix

\section{The 205 input polynomials}\label{app:sample}
Monic cubics \(x^3+a_2x^2+a_1x+a_0\), listed as triples \((a_2,a_1,a_0)\): the first 200 in the
lexicographic enumeration of \S3.1, followed by the five literature-tagged polynomials. The
machine-readable list (SHA-256 \texttt{2cf13f23\allowbreak{}7abba972\allowbreak{}\dots}, full
hash in the archive manifest) is the sole data input of the regeneration target.

\begingroup\scriptsize\setlength{\tabcolsep}{4pt}
\begin{center}
\begin{tabular}{lllll}
\((-6,-6,-6)\) & \((-6,-6,-5)\) & \((-6,-6,-4)\) & \((-6,-6,-3)\) & \((-6,-6,-2)\) \\
\((-6,-5,-6)\) & \((-6,-5,-5)\) & \((-6,-5,-4)\) & \((-6,-5,-3)\) & \((-6,-5,-2)\) \\
\((-6,-5,-1)\) & \((-6,-4,-6)\) & \((-6,-4,-5)\) & \((-6,-4,-4)\) & \((-6,-4,-3)\) \\
\((-6,-4,-2)\) & \((-6,-4,-1)\) & \((-6,-3,-6)\) & \((-6,-3,-5)\) & \((-6,-3,-4)\) \\
\((-6,-3,-3)\) & \((-6,-3,-2)\) & \((-6,-3,-1)\) & \((-6,-2,-6)\) & \((-6,-2,-5)\) \\
\((-6,-2,-4)\) & \((-6,-2,-3)\) & \((-6,-2,-2)\) & \((-6,-2,-1)\) & \((-6,-1,-6)\) \\
\((-6,-1,-5)\) & \((-6,-1,-4)\) & \((-6,-1,-3)\) & \((-6,-1,-2)\) & \((-6,-1,-1)\) \\
\((-6,0,-6)\) & \((-6,0,-5)\) & \((-6,0,-4)\) & \((-6,0,-3)\) & \((-6,0,-2)\) \\
\((-6,0,-1)\) & \((-6,1,-5)\) & \((-6,1,-4)\) & \((-6,1,-3)\) & \((-6,1,-2)\) \\
\((-6,1,-1)\) & \((-6,2,-6)\) & \((-6,2,-5)\) & \((-6,2,-4)\) & \((-6,2,-3)\) \\
\((-6,2,-2)\) & \((-6,2,-1)\) & \((-6,3,-6)\) & \((-6,3,-5)\) & \((-6,3,-4)\) \\
\((-6,3,-3)\) & \((-6,3,-2)\) & \((-6,3,-1)\) & \((-6,4,-6)\) & \((-6,4,-5)\) \\
\((-6,4,-4)\) & \((-6,4,-3)\) & \((-6,4,-2)\) & \((-6,4,-1)\) & \((-6,5,-6)\) \\
\((-6,5,-5)\) & \((-6,5,-4)\) & \((-6,5,-3)\) & \((-6,5,-2)\) & \((-6,6,-6)\) \\
\((-6,6,-4)\) & \((-6,6,-3)\) & \((-6,6,-2)\) & \((-5,-6,-6)\) & \((-5,-6,-5)\) \\
\((-5,-6,-4)\) & \((-5,-6,-3)\) & \((-5,-6,-2)\) & \((-5,-5,-5)\) & \((-5,-5,-4)\) \\
\((-5,-5,-3)\) & \((-5,-5,-2)\) & \((-5,-4,-6)\) & \((-5,-4,-5)\) & \((-5,-4,-4)\) \\
\((-5,-4,-3)\) & \((-5,-4,-2)\) & \((-5,-4,-1)\) & \((-5,-3,-6)\) & \((-5,-3,-5)\) \\
\((-5,-3,-4)\) & \((-5,-3,-3)\) & \((-5,-3,-2)\) & \((-5,-3,-1)\) & \((-5,-2,-6)\) \\
\((-5,-2,-5)\) & \((-5,-2,-4)\) & \((-5,-2,-3)\) & \((-5,-2,-2)\) & \((-5,-2,-1)\) \\
\((-5,-1,-6)\) & \((-5,-1,-5)\) & \((-5,-1,-4)\) & \((-5,-1,-3)\) & \((-5,-1,-2)\) \\
\((-5,-1,-1)\) & \((-5,0,-6)\) & \((-5,0,-5)\) & \((-5,0,-4)\) & \((-5,0,-3)\) \\
\((-5,0,-2)\) & \((-5,0,-1)\) & \((-5,1,-6)\) & \((-5,1,-4)\) & \((-5,1,-3)\) \\
\((-5,1,-2)\) & \((-5,1,-1)\) & \((-5,2,-6)\) & \((-5,2,-5)\) & \((-5,2,-4)\) \\
\((-5,2,-3)\) & \((-5,2,-2)\) & \((-5,2,-1)\) & \((-5,3,-6)\) & \((-5,3,-5)\) \\
\((-5,3,-4)\) & \((-5,3,-3)\) & \((-5,3,-2)\) & \((-5,3,-1)\) & \((-5,4,-6)\) \\
\((-5,4,-5)\) & \((-5,4,-4)\) & \((-5,4,-3)\) & \((-5,4,-2)\) & \((-5,4,-1)\) \\
\((-5,5,-6)\) & \((-5,5,-5)\) & \((-5,5,-3)\) & \((-5,5,-2)\) & \((-5,5,4)\) \\
\((-5,5,5)\) & \((-5,5,6)\) & \((-5,6,-6)\) & \((-5,6,-5)\) & \((-5,6,-4)\) \\
\((-5,6,-3)\) & \((-5,6,1)\) & \((-5,6,2)\) & \((-5,6,3)\) & \((-5,6,4)\) \\
\((-5,6,5)\) & \((-5,6,6)\) & \((-4,-6,-6)\) & \((-4,-6,-5)\) & \((-4,-6,-4)\) \\
\((-4,-6,-3)\) & \((-4,-6,-2)\) & \((-4,-5,-6)\) & \((-4,-5,-5)\) & \((-4,-5,-4)\) \\
\((-4,-5,-3)\) & \((-4,-5,-2)\) & \((-4,-4,-6)\) & \((-4,-4,-4)\) & \((-4,-4,-3)\) \\
\((-4,-4,-2)\) & \((-4,-4,-1)\) & \((-4,-3,-6)\) & \((-4,-3,-5)\) & \((-4,-3,-4)\) \\
\((-4,-3,-3)\) & \((-4,-3,-2)\) & \((-4,-3,-1)\) & \((-4,-2,-6)\) & \((-4,-2,-5)\) \\
\((-4,-2,-4)\) & \((-4,-2,-3)\) & \((-4,-2,-2)\) & \((-4,-2,-1)\) & \((-4,-1,-6)\) \\
\((-4,-1,-5)\) & \((-4,-1,-4)\) & \((-4,-1,-3)\) & \((-4,-1,-2)\) & \((-4,-1,-1)\) \\
\((-4,0,-6)\) & \((-4,0,-5)\) & \((-4,0,-4)\) & \((-4,0,-3)\) & \((-4,0,-2)\) \\
\((-4,0,-1)\) & \((-4,1,-6)\) & \((-4,1,-5)\) & \((-4,1,-3)\) & \((-4,1,-2)\) \\
\((-4,1,-1)\) & \((-4,2,-6)\) & \((-4,2,-5)\) & \((-4,2,-4)\) & \((-4,2,-3)\) \\
\((0,0,-2)\) & \((0,0,-17)\) & \((-3,0,-2)\) & \((2,1,4)\) & \((0,0,-4)\) \\
\end{tabular}
\end{center}
\endgroup

\section{One step by hand; certificate format, with two worked examples}\label{app:certificates}

\emph{One full step, by hand.} Take the plastic field \(p=x^3-x-1\) (\(\alpha\approx 1.3247\)) and the canonical seed \(s_0=(1,\alpha,\alpha^2)\). Step 1 of \S2.5: sorting by the real embedding gives \((u_1,u_2,u_3)=(\alpha^2,\alpha,1)\), \((x,y,z)\approx(1.7549,1.3247,1)\), all positive. Step 2: the ranges of \S2.1 give \(0\le a\le\lfloor x/z\rfloor=1\), \(0\le b\le\lfloor y/z\rfloor=1\), \(g\) bounded accordingly, and \(W\) is admissible since \(z>x-y>0\) (here \(1>0.4302>0\)). Twelve candidates have positive embeddings; none is isotropic. Step 3, their scores by the closed form of \S2.2 (six decimals shown; all comparisons are decided exactly):

\begin{center}\footnotesize
\begin{tabular}{@{}llr@{}}
\toprule
digit & new state (coordinate rows) & score\\
\midrule
\(V(1,0,0)\) & \((-1,0,1),(0,1,0),(1,0,0)\) & \(-3.504475\)\\
\(V(1,1,1)\) & \((0,-1,1),(-1,1,0),(1,0,0)\) & \(-1.127361\)\\
\(W\) & \((0,-1,1),(0,1,0),(1,1,-1)\) & \(-1.127361\)\\
\(V(0,0,1)\) & \((0,-1,1),(0,1,0),(1,0,0)\) & \(-0.960344\)\\
\(V(1,1,0)\) & \((-1,0,1),(-1,1,0),(1,0,0)\) & \(-0.960344\)\\
\(V(0,1,0)\) & \((0,0,1),(-1,1,0),(1,0,0)\) & \(-0.440812\)\\
\(V(0,1,1)\) & \((1,-1,1),(-1,1,0),(1,0,0)\) & \(-0.371747\)\\
\(V(1,1,2)\) & \((1,-2,1),(-1,1,0),(1,0,0)\) & \(-0.136377\)\\
\(V(0,1,2)\) & \((2,-2,1),(-1,1,0),(1,0,0)\) & \(-0.107367\)\\
\(V(0,1,3)\) & \((3,-3,1),(-1,1,0),(1,0,0)\) & \(-0.026386\)\\
\(V(0,1,4)\) & \((4,-4,1),(-1,1,0),(1,0,0)\) & \(-0.007578\)\\
\(V(0,1,5)\) & \((5,-5,1),(-1,1,0),(1,0,0)\) & \(-0.002803\)\\
\bottomrule
\end{tabular}
\end{center}

Step 4: the minimum is \(V(1,0,0)\), separated from the runner-up by a factor of about~3 --- no tie at this step. Step 5: the digit is \(V(1,0,0)\), the new state is \((\alpha^2-1,\alpha,1)\), and from there the run repeats \(V(1,0,0)\): \(m=1\), \(P=1\), closed by the first certificate below. Note the two score coincidences away from the minimum (\(V(1,1,1)\)/\(W\) and \(V(0,0,1)\)/\(V(1,1,0)\), equal to 120 digits): the two displayed coincidences are away from the minimum and were checked numerically to 120 digits; exact algebraic equalities \emph{of the minimum} are certified at 156 of the 2285 plastic-graph states (\S2.3) --- score coincidences are thus visible already at the very first step of the very smallest field.

\medskip
Each run's certificate consists of: the pre-period \(m\) and period \(P\); the scalar
\(\lambda\in K\) with \(s_{m+P}=\lambda\cdot s_m\) componentwise in exact field arithmetic; and
the four checks of \S2.4 (same \(\lambda\) on all three components; \(|N_{K/\mathbb{Q}}(\lambda)|=1\);
\(\mathbb{Q}(\lambda)=K\); \(\lambda\in\mathcal{O}_K^\times\) via its \(\zeta\cdot\varepsilon^k\)
decomposition). Two examples:

\emph{Plastic field} (\(p=x^3-x-1\), determinization campaign): \(m=1\), \(P=1\),
\(\lambda=\alpha^2-1\), \(N(\lambda)=1\), \(\zeta=+1\), \(k=1\); the same \(\lambda\) closes all
three components, and \(\alpha^2-1=1/\alpha\) is the inverse of the fundamental unit.

\emph{First mass polynomial} (\(p=x^3-6x^2-6x-6\), disc \(-7884\)): \(m=1\), \(P=3\),
\(\lambda=-\alpha+7\), \(\zeta=+1\), \(k=1\), \(N(\lambda)=1\).

In the archive, each mass-table row carries these fields explicitly, and each transition of the
exact replays carries the winning digit, the runner-up, and a certified separating gap (or an
exact-tie flag with the tie set).

\end{document}